\newcommand{\R}{{\mathbb R}}
\newcommand{\N}{{\mathbb N}}
\newcommand{\non}{{\nonumber}}
\newcommand{\W}{{\mathcal W}}
\newcommand{\e}{\varepsilon}
\renewcommand{\d}{{\rm d}}
\newcommand{\wu}{\underline{w}}
\newcommand{\vu}{\underline{u}}
\newcommand{\he}{\lambda}
\newcommand{\rel}{\rho^{\rm el.}_\he}
\newcommand{\Id}{{\rm Id}}
\newtheorem{theorem}{Theorem}
\newtheorem{lemma}{Lemma}
\newtheorem{cor}{Corollary}
\newtheorem{prop}{Proposition}
\theoremstyle{definition}
\newtheorem{definition}{Definition}
\newtheorem{rem}{Remark}
\title{Almost conical deformations of thin sheets with
rotational symmetry}
\author[1]{Stefan M\"uller\thanks{stefan.mueller@hcm.uni-bonn.de}}
\author[1]{Heiner Olbermann\thanks{heiner.olbermann@hcm.uni-bonn.de}}
\affil[1]{Hausdorff Center for Mathematics \& Institute for Applied Mathematics,
 University of Bonn, Germany}
\date{\today}
\begin{document}
\maketitle
\begin{abstract}
\noindent
It has been found in numerical experiments \cite{2006PhRvE..73d6604L} that when
one removes a sector from an elastic sheet and glues the edges of the sector
back together, the resulting configuration is radially symmetric  and nearly
conical. We make a rigorous analysis of this setting under two simplyfying
assumptions: Firstly, we only consider radially symmetric
configurations. Secondly, we consider the so-called von-K\'arm\'an limit,
where the size of the removed region as well as the deformations are small. We
choose free boundary conditions for a sheet of infinite size.
We show existence  of  minimizers of the suitably renormalized free energy
functional. As a by-product, we obtain a lower bound for the elastic energy
that has been conjectured in the related context of \emph{d-cones}
\cite{2012arXiv1208.4298M}. Moreover, we determine the  shape of minimizers at
infinity up to  terms that decay like $\exp(-c\sqrt{r})$.
\end{abstract}
\section{Introduction}


The folding of paper or other thin elastic sheets is one of the many examples of energy focusing in the physical world. 
Starting in the late 90's, there has been a lot of interest in this problem in the physics
community
\cite{PhysRevE.71.016612,PhysRevLett.80.2358,Cerda08032005,RevModPhys.79.643,CCMM,PhysRevLett.87.206105,PhysRevLett.78.1303,Lobkovsky01121995,PhysRevLett.90.074302}. In
particular the crumpling of paper (i.e.~the crushing of a thin elastic sheet
into a container whose diameter is smaller than the size of the sheet) which
results in complex folding patterns has drawn a lot of attention. It has been
conjectured that the energy density per thickness $h$ of such a folding pattern
scales with $h^{5/3}$. One major contribution in the rigorous analysis of this
problem is \cite{MR2358334}, building on ideas from \cite{MR2023444}.\\
Here we focus on  approximately conical deformations of thin elastic
sheets, that can be viewed as (one kind of) building blocks of crumpled
deformations. One example for this is a sheet that is pushed into a hollow cylinder, such that the indentation of the sheet is small. 
The resulting structure is called a \emph{d-cone} (developable cone). In the
physics literature, it has been discussed e.g.~in
\cite{PhysRevLett.80.2358,Cerda08032005,PhysRevE.71.016612,RevModPhys.79.643}. There
are several remarkable features of the d-cone, one of which  
is that the tip of the d-cone consists of a crescent-shaped ridge where
curvature and elastic stress focus. In numerical simulations it was found that
the  radius of the crescent $R_{\rm cres.}$ scales with the thickness of the
sheet $h$ and the radius of the container $R_{\rm cont.}$ as $R_{\rm
  cres.}\sim h^{1/3}R_{\rm cont.}^{2/3}$. This dependence on the container
radius of the shape of the region near the tip  is not fully understood
\cite{RevModPhys.79.643}. As argued in this latter reference, it cannot be
explained by an analysis of the dominant contributions to the elastic energy,
which are: The bending energy from the region far away from the center, which
is well captured by modeling the d-cone as a developable surface there; and
the bending and stretching energy  part from a core region of size $O(h)$
where elastic strain is not negligible. The result of this (non-rigorous)
argument is an energy scaling $E\sim h^2 (C_1|\log h|+C_2)$. This is a natural
guess -- the situation here bears some resemblance to vortices in the
Ginzburg-Landau model, where this is the right
energy scaling \cite{MR1269538}. \\ 
In \cite{2012arXiv1208.4298M,BK1} the scaling of the elastic energy of a d-cone with respect to its thickness $h$ has been analyzed in a rigorous setting. The result from \cite{2012arXiv1208.4298M} is
\begin{equation}
h^2 (C_1|\log h|-C_2\log|\log h|)\leq {\mathcal E}_h \leq h^2 (C_1|\log h|+C_3)\,.
\label{eq:lowb}
\end{equation}
The lower bound does not achieve the conjectured scaling behaviour, and it seems that this claim can not be proved
with the methods used in \cite{2012arXiv1208.4298M}. \\
\\
Here we consider another situation which involves the regularization of
an isometric cone through the higher order bending energy. The main difference
with the (general) d-cone is that here the underlying cone is a surface of revolution. Hence it is meaningful to study the problem of the competition
between bending and stretching energies in a radially symmetric setting. This
makes it possible to use ODE methods in addition to energy methods.
We will show that a scaling result analogous to the
one above without the $\log\log h$ terms on the left hand side holds in this
simpler setting. \\
\\
The setting is the following: to create an approximately conical deformation of an
elastic sheet, we cut out a sector of angle $\beta$ and glue the edges of this
sector back together. This situation has been investigated numerically in \cite{2006PhRvE..73d6604L},
where it is called ``regular cone''. 
In this situation, radially symmetric deformations are admissible -- in
contrast to the
case of the d-cone, where the boundary conditions are not radially symmetric.
To the best of our knowledge, it is not known whether the global minimizers of
the ``regular cone'' are
radially symmetric. We nonetheless believe that a careful study of minimizers
within the class of radially symmetric deformations will help to understand the
structure of local and global minimizers as well as the local and global
stability of possible radially symmetric minimizers.
Since we are interested in the  asymptotic behaviour, we consider  a sheet of
infinite radius with free boundaries (after suitable renormalization of the
energy, see below).\\
\\
Apart from the restriction to radially symmetric configurations, we make one
more simplification in comparison to the situation in \cite{2012arXiv1208.4298M}: 
We use the so-called von-K\'arm\'an approximation of non-linear elasticity
\cite{ciarlet1997theory,MR2210909}. This means that the out-of-plane component of the
deformation is supposed to be of the order  $\e\ll 1$, and the size of the removed
sector as well as  the in-plane deformation are of order $\e^2$. All terms in the elastic energy of order $\e^k$, $k>4$, and of order $h^2\e^k$, $k>2$ are discarded.\\
\\
As we will explain in Section \ref{model}, these considerations lead to the
definition of the free elastic energy density
\begin{equation*}
\rel=  (\hat w^2-1+\hat
  u')^2+\left(\frac{\hat u}{r}\right)^2+\he^2\left(\hat
    w'^2+\frac{\hat w^2}{r^2}\right)
\end{equation*}
where $\he=h/\e$ and the deformation of the sheet is given as a map from spherical to
cylindrical coordinates by
\begin{equation}
(r,\varphi)\mapsto \left(r+\frac{\e^2}{2}(\hat u-r),\sqrt{1+\e^2}\varphi,\e W\right)\label{eq:12}
\end{equation}
with $W'=\hat w$. The renormalized energy functional is
\begin{equation}
\begin{array}{rrl}\hat E_\he:& \W& \to \R\cup\{+\infty\}\\
&(\hat u,\hat w)&\mapsto
\lim_{R\to\infty}\int_0^R r \d r\left( \rel(r)-\he^2\frac{\psi(r/\he)^2}{r^2}\right)\end{array}
\label{introen}
\end{equation}
where 
\begin{align*}
 \W=\Big\{&(\hat u,\hat w)\in W^{1,2}_{\rm loc}((0,\infty),\R^2):\,\int_0^1 r \d r \rel(r)<\infty\Big\}\,,
\end{align*}
and $\psi$ is some cutoff
function with $\psi(r)=0$ for $r$ close to $0$ and $\psi(r)=1$ for $r\geq
1$. 
We will show in Lemma \ref{pro:boundary} that the condition $\int_0^1 r \d r
\rel(r)<\infty$ implies $\hat u(0)=\hat w(0)=0$ and thus the deformation
\eqref{eq:12} is continuous at the origin for all $(\hat u,\hat w)\in \W$.\\

The aim of the present contribution is to prove
\begin{theorem}
\label{mainthm}
The functional $\hat E_\he$ from eq.~\eqref{introen} is well defined and bounded
from below. It possesses  minimizers $(\hat u,\hat w)$ in $\W$ with $\hat w\geq
0$ and  $\hat E_\he(\hat u,\hat w)<\infty$.
Furthermore, each minimizer $(\hat u,\hat w)$ with $\hat w\geq 0$
 satisfies
\begin{align*}
\hat u(r)=&\frac{\he}{2r}+o(\exp(-\sigma\sqrt{r/\he}))\\
\hat w(r)=&1+o(\exp(-\sigma\sqrt{r/\he}))\,
\end{align*}
as $r\to \infty$ for any $\sigma<2$.
\end{theorem}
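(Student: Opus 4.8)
The plan is to exploit the exact scale invariance of the problem and then treat the three assertions (well-posedness plus lower bound, existence, and the sharp asymptotics) in turn, the last being an ODE analysis at the irregular singular point $r=\infty$. First I note that substituting $r=\he\rho$, $\hat w(r)=W(\rho)$, $\hat u(r)=\he\,U(\rho)$ renders both $\rel$ and the counterterm $\he^2\psi(r/\he)^2/r^2$ scale invariant, so that $\hat E_\he(\hat u,\hat w)=\he^2 E_1(U,W)$ and it suffices to argue at $\he=1$. Writing $S:=\hat w^2-1+\hat u'$, the renormalized integrand for $r\ge1$ (where $\psi=1$) is $rS^2+\hat u^2/r+r\hat w'^2+(\hat w^2-1)/r$, in which only the last term is sign-indefinite. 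I would rewrite $(\hat w^2-1)/r=S/r-\frac{\d}{\d r}(\hat u/r)-\hat u/r^2$ and estimate the two non-exact pieces by Young's inequality against the available squares, $|S|/r\le\frac12 rS^2+\frac1{2r^3}$ and $|\hat u|/r^2\le\frac1{2r}\hat u^2+\frac1{2r^3}$. This yields, for $r\ge1$, integrand $\ge \frac12 rS^2+\frac1{2r}\hat u^2+r\hat w'^2-\frac1{r^3}-\frac{\d}{\d r}(\hat u/r)$, whose integral over $[1,R]$ is bounded below by a fixed constant plus the boundary contribution $\hat u(1)-\hat u(R)/R$. The far boundary term vanishes in the limit for finite-energy configurations; the inner boundary term and the contribution of $[0,1]$ are controlled by $\int_0^1 r\,\rel\,\d r<\infty$ together with Lemma~\ref{pro:boundary} (which gives $\hat u(0)=\hat w(0)=0$) and absorbed into the positive squares by a further Young step. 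This simultaneously shows that the limit defining $\hat E_\he$ exists in $\R\cup\{+\infty\}$ and that $\hat E_\he\ge -C(\he)$.

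For existence I would use the direct method. Along a minimizing sequence the bound above gives uniform control of $\int rS_n^2$, $\int\hat u_n^2/r$ and $\int r\hat w_n'^2$ on every $[\delta,R]$, hence weak $W^{1,2}_{\rm loc}$ compactness; in one dimension $W^{1,2}_{\rm loc}\hookrightarrow C^0_{\rm loc}$ compactly, so (a subsequence of) $\hat w_n\to\hat w$ locally uniformly and $\hat w_n^2\to\hat w^2$, whence $S_n\rightharpoonup S$ weakly in $L^2_{\rm loc}(r\,\d r)$. Convexity of the integrand in $(\hat u',\hat w')$ then gives weak lower semicontinuity on compact annuli, and the uniform coercive lower bound controls the tails (no energy escapes to $0$ or $\infty$), so $\hat E_\he(\hat u,\hat w)\le\liminf\hat E_\he(\hat u_n,\hat w_n)$ and $(\hat u,\hat w)\in\W$ is a minimizer. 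Finally, since the integrand depends on $\hat w$ only through $\hat w^2$ and $\hat w'^2$, replacing $\hat w$ by $|\hat w|\in W^{1,2}_{\rm loc}$ leaves the energy unchanged, so one may take $\hat w\ge0$.

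For the asymptotics, fix a minimizer with $\hat w\ge0$ and $\hat E_\he<\infty$; by standard bootstrapping it is smooth on $(0,\infty)$ and solves the Euler--Lagrange system $(rS)'=\hat u/r$ and $\he^2(r\hat w')'=2\hat w\,rS+\he^2\hat w/r$. Finiteness of the positive part of the energy forces $S\to0$, $\hat w'\to0$ and $\hat u/\sqrt r\to0$ in the $L^2$-tail, and a short argument (a nonzero limit of $\hat w$ would make $\int(\hat w^2-1)/r$ diverge, contradicting either the lower bound or minimality) upgrades this to $\hat w\to1$, $\hat u\to0$ pointwise. The system admits the explicit algebraically decaying solution with $\hat w\equiv1$ and $\hat u$ a constant multiple of $1/r$ — the far-field profile recorded in the statement. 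Writing $\hat w=1+v$ and subtracting this profile from $\hat u$, I linearize; at $\he=1$ the deviations satisfy, after eliminating $\hat u$,
\begin{equation*}
r^2 v''''+4r v'''+v''-\frac{v'}{r}+4v=0 ,
\end{equation*}
a fourth-order equation with an irregular singular point at $r=\infty$.

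Liouville--Green/Levinson theory for this equation produces four formal solutions $\sim\exp(\alpha\sqrt r)$ with $\alpha^4=-64$, i.e.\ $\mathrm{Re}\,\alpha=\pm2$ (the decaying pair carrying the oscillation $\exp(\pm2\mathrm{i}\sqrt r)$); restoring the $\he$-dependence gives the rate $\exp(-2\sqrt{r/\he})$. The \textbf{main obstacle} is to convert these formal rates into genuine estimates for the true nonlinear minimizer: one must (i) show that the finite-energy solution lies on the two-dimensional stable manifold of the far-field profile, and (ii) control the nonlinear remainder, which is quadratic in the already exponentially small deviations and is therefore subdominant. I would set up a weighted fixed-point/Gronwall scheme on $[\rho_0,\infty)$ for the first-order four-dimensional form of the system, using the a priori smallness just obtained as the base case and the gap $\mathrm{Re}\,\alpha=-2$ versus $+2$ to contract onto the decaying subspace. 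This yields $|v|+|\hat u-(\text{far-field})|=o(\exp(-\sigma\sqrt{r/\he}))$ for every $\sigma<2$, since any such $\sigma$ leaves room to absorb the algebraic prefactors and the quadratic feedback, completing the proof.
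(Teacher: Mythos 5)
Your overall architecture coincides with the paper's: reduction to $\he=1$ by scaling, splitting the renormalized integrand for $r\ge 1$ into a positive part plus a total derivative in $\hat u/r$, the direct method, and then the change of variables $s=\sqrt r$ with linearized exponents of real part $\pm 2$ and a stable-manifold argument (your fourth-order linearized equation is exactly the paper's). But there is a genuine gap at precisely the point the paper identifies as the main difficulty. Your lower bound produces the boundary term $\hat u(1)-\hat u(R)/R$, and you dispose of $\hat u(R)/R$ by asserting it ``vanishes in the limit for finite-energy configurations.'' This does not follow from anything you have established: the positive part of the energy controls only $\int r S^2$, $\int \hat u^2/r$ and $\int r\hat w'^2$; it gives no direct bound on $\hat u'$ (which enters only through $S=\hat w^2-1+\hat u'$, with $\hat w$ itself uncontrolled) nor on $\hat w$. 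From $\int_1^\infty \hat u^2/r\,\d r<\infty$ alone one cannot conclude $\hat u(R)/R\to 0$ (tall spikes of width $R^{-3}$ at $R=2^k$ defeat it). Worse, for the direct method and for the claim that $\hat E_\he$ is bounded below on all of $\W$ you need a quantitative version: $|\hat u(R)|/R$ must be bounded by an expression in the positive part $E^{+,R}$ that can be absorbed, uniformly over competitors. The paper fills exactly this hole with Lemma \ref{lem:interpol} (an $H^1$--$H^{-1}$-type interpolation with exponential weights after $r=e^t$) and Lemma \ref{lem:boundwg}, which simultaneously bounds $\sup|w|$, the weighted $L^2$ norms of $2w+w^2$ and $u'$, and $R^{-1/2}|u(R)|$. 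Without this (or an equivalent), your well-definedness, lower bound, and compactness steps all remain unproven.

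The same omission propagates into your decay argument. Your ``short argument'' for $\hat w\to 1$ presumes that $\hat w$ has a limit and that the indefinite term $\int(\hat w^2-1)/r$ is separately meaningful; neither is known a priori. The paper instead gets $2w+w^2\to 0$ pointwise from the weighted $L^2$ bound of Lemma \ref{lem:boundwg} together with Lemma \ref{lem:estsupH1a}, and only then uses $\hat w\ge 0$ to invert $s\mapsto 2s+s^2$. Moreover, your fixed-point/Gronwall scheme ``contracting onto the decaying subspace'' requires as input the smallness (indeed convergence to zero) of the full phase vector $(v,v',v'',v''')$, whereas the a priori information you invoke is only zeroth order. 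Establishing that the first three derivatives of $\wu$ tend to zero in the $s=\sqrt r$ variable is the technical core of the paper's Proposition \ref{decayprop} (weighted $L^2$ estimates extracted from the Euler--Lagrange system, then sup bounds), and it cannot be skipped: without it a finite-energy solution could a priori wander in phase space, and the dichotomy behind the stable-manifold theorem (or behind your contraction) yields nothing. So the ODE part is the right idea, but the bridge from finite energy to membership in the stable manifold --- derivative decay --- is missing, and its proof again runs through the interpolation machinery you left out.
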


As a side product of the proof of Theorem \ref{mainthm}, we will get a lower
bound for the elastic energy when the radius of the elastic sheet in the
reference configuration is assumed to be finite. This lower bound is better than
the analogous one from eq.~\eqref{eq:lowb} in that the $\log\log$-terms are not
present. 
To give an idea how this ``improved'' lower bound comes about, let 
\begin{equation}
I_\he=\int_0^1 r \d r\rel\,.\label{eq:11}
\end{equation}
The first step to establish the lower
bound in the present setting is the right renormalization of the elastic energy density.
We expect a logarithmic divergence in $\lambda$ of $\he^{-2}I_\he$ as $\he\to 0$. Thus we
make the replacement
\[
\rel(r)\to\rel(r)-\lambda^{2}\frac{\psi(r/\he)^2}{r^2}
\]
The key step is now to find a change of variables that makes it obvious that
\begin{equation}
\label{eq:2}
\int_0^1 r\d r\left(\rel(r)-\he^{2}\frac{\psi(r/\he)^2}{r^2}\right)
\end{equation}
is bounded from below by some constant times $\lambda^2$.
As we will see in Section \ref{existence}, such a change of variables does
exist,
and will leave us only with
manifestly positive terms in the renormalized energy eq.~\eqref{eq:2} plus some
divergence-like term that will be estimated in a suitable manner in Proposition
\ref{cor:welldef}. Thus we get the sought-for lower bound
\begin{equation}
\he^{-2}I_{\he}\geq |\log \he|-C \,.
\label{eq:lowb2}
\end{equation}
This paper is organised as follows: In Section \ref{model}, we motivate and
define our model. In Section \ref{existence} we establish a lower bound for the
renormalized energy and prove the existence of
minimizers of the  elastic free energy functional.  In a
remark at the end of that section,
we will discuss a pathology of the model presented here. In section
\ref{decay1}, we use stable manifold theory to show that minimizers converge to the conical configuration
at infinity.\\
\\
{\bf Notation.} In this paper, the letter $C$ stands for numerical constants
that are independent of all the other variables. Its value may change within the
same equation. In section \ref{model}, we will choose a cutoff function
$\psi\in C^\infty([0,\infty))$, that we have already mentioned above. The
cutoff function $\psi$  will then be fixed for the rest of the
paper. We will not indicate the dependence of constants on this choice of
$\psi$. Whenever we speak of functions $f\in W^{1,2}_{\rm loc.}(I)$ for some $I\subset\R$, it will be tacitly
understood that we mean its continuous representative.

\section{The model}
\label{model}
{\bf Cutting out a sector, glueing the edges back together.} 
For small $\e>0$ let $\beta^{(\e)}$ be defined by $2\pi/(2\pi-\beta^{(\e)})=\sqrt{1+\e^2}$,
and let
\begin{equation*}
B^{(\e)} =\R^2\setminus\left\{(x_1,x_2):x_2<0<x_1, \, -\beta^{(\e)}<\arctan x_2/x_1<0\right\}
\end{equation*}
We define a deformation $y:B^{(\e)}\rightarrow\R^3$ whose image has
rotational symmetry. We are going to use cylindrical coordinates $(r,\varphi)$,
\begin{equation}
y(r,\varphi)=U(r)e_{r}^{(\e)}+V(r)e_z
\end{equation}
where
\begin{align*}
U:&\,[0,\infty)\rightarrow [0,\infty)\non\\
V:&\,[0,\infty)\rightarrow (-\infty,\infty)\non\\
e_r^{(\e)}&=(\cos \varphi^{(\e)},\sin\varphi^{(\e)},0)\non\\
\varphi^{(\e)}&=\frac{2\pi}{2\pi-\beta^{(\e)}}\,\varphi
\end{align*}
We calculate
\begin{align*}
\nabla y=&\left(U'e_r^{(\e)}+V'e_z\right)\otimes e_r
+\frac{\sqrt{1+\e^2}}{r}U\, e_\varphi^{(\e)}\otimes e_\varphi\\
\nabla y^T\nabla y-\Id=&\left((U')^2+(V')^2-1\right)e_r\otimes
e_r+\left(\frac{1+\e^2}{r^2}U^2-1\right) e_\varphi\otimes e_\varphi\\
\nabla^2 y=& \left(U''e_r^{(\e)}+V''e_z\right)\otimes e_r\otimes e_r
+\sqrt{1+\e^2}\left(\frac{U}{r}\right)'e_\varphi^{(\e)}\otimes
\left(e_\varphi\otimes e_r+e_r\otimes e_\varphi\right)\\
&+\left(\frac{U'}{r}-\frac{(1+\e^2)U}{r^2}\right)e_r^{(\e)}\otimes
e_\varphi\otimes e_\varphi
+\frac{V'}{r}e_{z}\otimes e_\varphi\otimes e_\varphi
\end{align*}
{\bf Definition of the elastic energy.} As a starting point, we choose the elastic energy density to be
\begin{align*}
\bar\rho_{\rm el.}=&\left|\nabla y^T\nabla y-\Id\right|^2+h^2|\nabla^2 y|^2
\end{align*}
where $h$ is a parameter for the thickness of the sheet under
consideration. The first term on the right hand side represents the stretching
energy density, the second one the bending energy density. This is a standard
ansatz for the elastic energy, for a justification see
e.g.~\cite{MR2358334}.\\
\\
{\bf Von-K\'arm\'an  ansatz, change of variables.} Now we make the ansatz
\begin{align}
U(r)=&r+O(\e^2)\non\\
V(r)=&O(\e)\,,\label{vKdef}
\end{align}
where $\e$ is some small parameter. The following changes of variables will be convenient,
\begin{align}
U(r)=&r+\frac{\e^2}{2}(\hat
u(r)-r)\non\\
V'(r)=&\e \hat w(r)\label{uhatvardef}
\end{align}
and, alternatively, 
\begin{align*}
U(r)&=r+\frac{\e^2}{2}\left(u(r)+\he^2\frac{\psi(r/\he)}{2r}-r\right)\\
V'(r)&=\e (\psi(r/\he)+w(r))\,,
\end{align*}
where $\he=h/\e$, $\psi\in C^\infty([0,\infty)$ with $\psi(r)=0$ near $r=0$ and
$\psi(r)=1$ for $r\geq 1$. \\
A short computation shows that the elastic energy density is given by
\begin{align*}
\bar\rho_{\rm el.}=&\e^4 
\left((\hat w^2-1+\hat u')^2+\left(\frac{\hat u}{r}\right)^2+\he^2\left(\hat w'^2+\frac{\hat w^2}{r^2}\right)
\right)\\
&+O(\e^6)+O(\e^6\he^2)\,.
\end{align*}
We build our model only considering the leading terms in $\e$ for fixed $\he$, defining
\begin{align}
 \rel=&\lim_{\e\rightarrow 0} \e^{-4}\bar\rho_{\rm el.}\non\\
=& \left((\hat w^2-1+\hat u')^2+\left(\frac{\hat u}{r}\right)^2+\he^2\left(\hat w'^2+\frac{\hat w^2}{r^2}\right)\right)\,.
\label{energyden}
\end{align}

{\bf Renormalization and rescaling.} The $(u,w)$ variables have been chosen such that we expect $u,u', w, w'$ to vanish as $r\to\infty$. By an inspection of the energy density $\rel(r)$ we expect  that the integral
\[
\int_0^R r\d r \rel(r)
\]
diverges logarithmically as $R\to\infty$. To have some hope of a meaningful
limit for $R\to\infty$,  we introduce the renormalized functional
\begin{equation}
\begin{array}{rrcl}\hat E^R_\he:&\W&\to& \R\non\\
&(\hat u,\hat w)&\mapsto& \int_0^R r\d r
\left(\rel(r)-\he^2\frac{\psi(r/\he)^2}{r^2}\right)\,.
\end{array}
\label{Rfinhat}
\end{equation}
In the sequel, we  will set $\he\equiv 1$, and derive all results for this value
of $\he$. The general case can  be
recovered by the change of variables $r\to r/\lambda$, which we will do at the
very end.
In fact the change of variable formula yields
\begin{equation}
  \label{eq:covhe}
  \hat E_\he^R(\hat u,\hat w)=\he^2\hat E_1^{R/\he}\left(\he^{-1}\hat u(\he\cdot),\hat
w(\he\cdot)\right)
\end{equation}
for any $\hat u,\hat w\in W^{1,2}_{\rm loc}$.\\
We will use the following notation:
\begin{align*}
\hat E^R_1=&\hat E^R\,\non\\
E^R(u,w)=&\hat E^R(\hat u,\hat w)
\end{align*}
For the reader's convenience, we summarize some of the notation for future
reference (with $\lambda\equiv 1$): 
\renewcommand{\rel}{\rho^{\rm el.}}
\begin{equation}
\boxed{
\begin{aligned}
  \W=&\Big\{(\hat u,\hat w)\in W^{1,2}_{\rm loc}((0,\infty),\R^2): E^1(u,w) < \infty  \Big\}\\
\psi\in &C^\infty([0,\infty))\text{ with }\\
&\psi(r)=0\text{ near }r=0\text{ and }\psi(r)=1\text{ for }r\geq 1\\
 u(r)=& \hat u(r)- \frac{\psi(r)}{2r}\\
 w(r)=& \hat w(r)- \psi(r)\\
\rel=&(\hat w^2-1+\hat u')^2+(\hat
  u/r)^2+\hat w'^2+r^{-2}\hat w^2\\
\hat E^R,E^R:&\W\to\R\\
\hat E^R(\hat u,\hat w)=&\int_0^R r\d r\left(\rel(r)-r^{-2}\psi^2\right)\\
E^R(u,w)=& \hat E^R(\hat u,\hat w)
\end{aligned}
}
\label{eq:13}
\end{equation}



\begin{prop}  \label{pro:boundary}  If $(u,w) \in \W$ then
\begin{equation}
\lim_{r \to 0} u(r) = 0,  \quad \lim_{r \to 0} w(r) = 0.
\end{equation}
\end{prop}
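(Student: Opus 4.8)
My plan is to reduce the statement to a one–dimensional weighted estimate. Since $\psi(r)=0$ for $r$ near $0$, we have $u=\hat u$ and $w=\hat w$ on some neighbourhood $(0,\delta_0)$ of the origin, so it suffices to show $\hat u(r)\to 0$ and $\hat w(r)\to 0$ as $r\to 0$. On $(0,\delta_0)$ the subtracted term $r^{-2}\psi^2$ vanishes, and since $\rel\ge 0$, the hypothesis $E^1(u,w)=\hat E^1(\hat u,\hat w)<\infty$ forces each of the four nonnegative contributions to $\int_0^{\delta_0} r\,\rel\,\d r$ to be finite separately; in particular
\begin{equation*}
\int_0^{\delta_0}\frac{\hat w^2}{r}\,\d r<\infty,\quad \int_0^{\delta_0} r\,\hat w'^2\,\d r<\infty,\quad \int_0^{\delta_0}\frac{\hat u^2}{r}\,\d r<\infty,\quad \int_0^{\delta_0} r\,(\hat w^2-1+\hat u')^2\,\d r<\infty.
\end{equation*}

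The key step I would isolate is the following elementary fact: if $f\in W^{1,2}_{\rm loc}((0,\delta_0))$ satisfies $\int_0^{\delta_0} r f'^2\,\d r<\infty$ and $\int_0^{\delta_0} f^2/r\,\d r<\infty$, then $\lim_{r\to 0}f(r)=0$. Indeed, $f$ is absolutely continuous on compact subintervals, hence so is $f^2$, and for $0<s<t<\delta_0$ Cauchy--Schwarz gives
\begin{equation*}
\big|f^2(t)-f^2(s)\big|=\Big|\int_s^t 2ff'\,\d\rho\Big|\le 2\Big(\int_s^t \frac{f^2}{\rho}\,\d\rho\Big)^{1/2}\Big(\int_s^t \rho f'^2\,\d\rho\Big)^{1/2}.
\end{equation*}
As $s,t\to 0$ both factors tend to $0$, being tails of convergent integrals, so $f^2$ is Cauchy and $L:=\lim_{r\to 0}f^2(r)$ exists. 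If $L>0$ then $f^2(r)\ge L/2$ near $0$, contradicting $\int_0^{\delta_0}f^2/r\,\d r<\infty$; hence $L=0$. Applying this to $f=\hat w$ using the first two integrals above yields $\hat w(r)\to 0$.

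It then remains to run the same argument for $\hat u$, and here lies the only subtlety I anticipate: the energy does not control $\hat u'$ directly, but only through the combination $\hat w^2-1+\hat u'$. Once $\hat w(r)\to 0$ is established, however, $\hat w$ is bounded near $0$, so $1-\hat w^2$ is bounded there; writing $\hat u'=(\hat w^2-1+\hat u')+(1-\hat w^2)$ and combining the finiteness of $\int_0^{\delta_0} r(\hat w^2-1+\hat u')^2\,\d r$ with $\int_0^{\delta_0} r(1-\hat w^2)^2\,\d r\le C\int_0^{\delta_0} r\,\d r<\infty$ gives $\int_0^{\delta_0} r\,\hat u'^2\,\d r<\infty$. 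The key step now applies to $f=\hat u$, yielding $\hat u(r)\to 0$, and therefore $u(r)=\hat u(r)\to 0$ and $w(r)=\hat w(r)\to 0$. I expect the ordering---treating $\hat w$ first so that control of the mixed term can be upgraded into control of $\hat u'$---to be the only point requiring genuine care; the remaining estimates are routine.
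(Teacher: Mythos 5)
Your proof is correct and takes essentially the same route as the paper: you first deduce $\hat w\to 0$ from the weighted bounds on $\hat w$ and $\hat w'$, use the resulting boundedness of $\hat w$ to upgrade the mixed term into $\int_0^{\delta_0} r\,\hat u'^2\,\d r<\infty$, then run the identical estimate for $\hat u$ and conclude via $u=\hat u$, $w=\hat w$ near the origin. The only difference is presentational: the paper isolates the key weighted estimate as Lemma~\ref{lem:estsupH1a} and proves it by the change of variables $r=\ee^t$, whereas you prove it directly by a weighted Cauchy--Schwarz inequality, which is the same computation in substance.
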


To show this we recall the following result.

\begin{lemma}  \label{lem:estsupH1a}
\begin{itemize}
\item[(i)] Let $a \in \R$ and let $I = (-\infty, a)$ or $I=(a, \infty)$. If $g \in W^{1,2}(I)$ then
\begin{equation}   \label{eq:estsupH1a}
\sup_I g^2 \le  2 \|g\|_{L^2}  \, \|g'\|_{L^2} \le  \int_I \d t  (g^2 + g'^2)
\end{equation}
and 
\begin{equation}
\lim_{t \to - \infty} g(t) = 0 \quad \mbox{or} \quad \lim_{t \to  \infty} g(t) = 0, \quad \mbox{respectively}.
\end{equation}
\item[(ii)] If $r_0 \in (0, \infty)$ let $J=(0, r_0)$ or  $J = (r_0, \infty)$ and assume that $h \in W^{1,2}_{loc}(J)$ and
\begin{equation}
\int_J  r \d r  \left[  h'^2 + \frac{h^2}{r^2} \right] < \infty
\end{equation}
then
\begin{equation}
\sup_I h^2 \le 2  \|h\|_{L^2(I; \d r/ r )} \,    \|h'\|_{L^2(I; r \d r )} \le \int_J  r \d r  \left[  h'^2 + \frac{h^2}{r^2} \right] 
\end{equation}
and 
\begin{equation}
\lim_{r \to 0} h(r) = 0 \quad \mbox{or} \quad \lim_{r \to  \infty} h(r) = 0, \quad \mbox{respectively}.
\end{equation}
\end{itemize}
\end{lemma}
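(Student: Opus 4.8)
The plan is to prove (i) directly via the fundamental theorem of calculus, and then to reduce (ii) to (i) by the logarithmic change of variables $r=\ee^t$.

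For (i), consider the case $I=(a,\infty)$ (the case $I=(-\infty,a)$ is identical with the endpoints interchanged). I would first argue that the continuous representative of $g$ satisfies $\lim_{t\to\infty} g(t)=0$: since $(g^2)'=2gg'$ lies in $L^1(I)$ by Cauchy--Schwarz, the function $g^2$ is absolutely continuous with integrable derivative, hence possesses a limit at $+\infty$; because $g^2\in L^1(I)$ this limit must vanish. Then for every $t\in I$ I write $g(t)^2=-\int_t^\infty 2gg'\,\d s$ and bound the right-hand side by $2\|g\|_{L^2}\|g'\|_{L^2}$ using Cauchy--Schwarz. Taking the supremum over $t\in I$ yields the first inequality in \eqref{eq:estsupH1a}, and the second inequality is just Young's inequality $2ab\le a^2+b^2$. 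The vanishing of the limit has already been established in the course of the argument.

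For (ii), I would substitute $t=\log r$ and set $k(t)=h(\ee^t)$, so that $h'(r)=k'(t)/r$. A direct computation then gives the exact identities $\int_J r\,\d r\,h'^2=\int_{\tilde J}k'^2\,\d t$ and $\int_J r\,\d r\,h^2/r^2=\int_{\tilde J}k^2\,\d t$, where $\tilde J=(\log r_0,\infty)$ for $J=(r_0,\infty)$ and $\tilde J=(-\infty,\log r_0)$ for $J=(0,r_0)$. In particular the hypothesis becomes precisely $k\in W^{1,2}(\tilde J)$, so part (i) applies to $k$ on $\tilde J$. Translating the norms back through the substitution — noting $\sup_{\tilde J}k^2=\sup_J h^2$, together with $\|k\|_{L^2(\tilde J)}=\|h\|_{L^2(J;\,\d r/r)}$ and $\|k'\|_{L^2(\tilde J)}=\|h'\|_{L^2(J;\,r\,\d r)}$ — produces the stated chain of inequalities, again closing with Young's inequality. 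Finally, the decay statement transfers verbatim: $\lim_{t\to\infty}k(t)=0$ is exactly $\lim_{r\to\infty}h(r)=0$, and the $J=(0,r_0)$ case gives $\lim_{r\to 0}h(r)=0$.

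There is no genuinely hard step. The only two points requiring care are the vanishing of the boundary value at the infinite endpoint in (i) — this is the single place where the full strength of $W^{1,2}$, rather than mere local integrability, is used — and the verification that the logarithmic substitution is an exact isometry between the weighted norms on $J$ and the flat $W^{1,2}$ norms on $\tilde J$; both are routine.
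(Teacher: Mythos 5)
Your proposal is correct and follows essentially the same route as the paper: part (i) via the fundamental theorem of calculus applied to $g^2$ together with Cauchy--Schwarz and Young's inequality, using $g^2\in L^1(I)$ on an unbounded interval to kill the boundary value at the infinite endpoint, and part (ii) reduced to (i) by the change of variables $r=\ee^t$. The only difference is cosmetic ordering: you establish the vanishing limit first and then write $g(t)^2$ as a tail integral, whereas the paper bounds $\sup g^2-\inf g^2$ and notes $\inf g^2=0$, obtaining the decay afterwards from the smallness of the tail integrals.
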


\begin{proof} Assertion (ii) follows from assertion (i) and the change of variables $r = e^t$. 
To prove (i) note that $(g^2)' = 2 g g'$ and thus by the fundamental theorem of calculus
\begin{equation}
\sup g^2 - \inf g^2 = \int_I   \d t   \,  2 | g g'|   \le \int_I \d t  (g^2 + g'^2).
\end{equation}
Moreover $\inf g^2 = 0$ since $g^2 \in L^1(I)$ and $I$ is unbounded. 
This proves \eqref{eq:estsupH1a}. Assume that $I = (-\infty, a)$.Then for any $t < a$ we
also have
\begin{equation}
\sup_{(-\infty, t)} g^2  \le  \int_{-\infty}^t   \d t (g^2 + g'^2)
\end{equation}
Now the right hand side goes to $0$ as $t \to - \infty$. Thus $\lim_{t \to - \infty} g(t) = 0$. 
The case $I = (a, \infty)$ is analogous. 
\end{proof}

\begin{proof}[Proof of Proposition \ref{pro:boundary}]
It follows from the condition $E^1(u,w) < \infty$ and Lemma \ref{lem:estsupH1a}  that $\sup_{(0,1)} |\hat w| < \infty$ and $\lim_{r \to 0} \hat w(r) = 0$.
This implies that $\int_0^1 r \d r  \hat u'^2  < \infty$. Hence another application of  Lemma \ref{lem:estsupH1a}  shows that
$\lim_{r \to 0} \hat u(r) = 0$. Since $u=\hat u$ and $w = \hat w$ in some interval $(0, r_0)$ the assertion of 
the proposition follows. 
\end{proof}

\section{Existence of  minimizers}
\label{existence}

We will show that for $(u,w) \in \W$ the limit $E(u,w) = \lim_{R \to \infty} E^R(u,w)$ exists in $(- \infty, \infty]$ and that
$E$ has a minimizer in $\W$. 

The main difficulty is that the renormalized energy density $\rel(r)-r^{-2}\psi^2$ is not pointwise
positive and therefore  it is not clear that $E^R(u,w)$ is bounded from below as $R \to \infty$. 
We thus proceed in several steps.
\begin{enumerate}
\item We first show that $\rel(r)-r^{-2}\psi^2$ can be rewritten as a pointwise positive term
plus  $ - \frac1r \left( \frac{u}{r} \right)' $ (plus a harmless explicit term with rapid decay at infinity).
The point is that  $ - \frac1r \left( \frac{u}{r} \right)'$ reduces to a boundary term when integrated against
the measure $r \d r$.  Thus $E^R(u,w)$ is bounded from below by the positive functional
\begin{alignat}1
E^{+,R}(u,w) :=  & \int_0^1  r \d r   \left[  \left(\hat w^2 - 1 + \hat u' \right)^2 +
 \frac{{\hat u}^2}{r^2} +  {\hat w}'^{2} + \frac{ {\hat w}^2}{r^2}
 \right] \\
&  +
\int_1^R  r \d r \left[  (2w +w^2 + u')^2  + \frac{u^2}{r^2} + w'^{2} \right].
\end{alignat}
up to the terms $u(1)$ and $u(R)/R$. 
\item The integrand in $E^+$ is a sum of positive terms but 
does not directly give bounds for $w$ and $u'$. We derive an interpolation 
inequality which allows us to estimate $g$ in a (weighted) $L^2$ space
if we control $(g+f')$  and $f$ and $g'$ in suitably weighted $L^2$ spaces
(this is essentially and interpolation between $H^1$ and $H^{-1}$, see Lemma \ref{lem:interpol}).

\item We would like to apply the interpolation inequality with $g = 2 w + w^2$ but to control
$g'$ in $L^2$ we need to control the $L^\infty$ norm of $w$.  On the other hand if we control
$g$ and $g'$ in $L^2$ then we control $g$, and hence $w$, in $L^\infty$. 
In Lemma \ref{lem:boundwg} we
show that one
can simultaneously bound the (weigthed)  $L^2$ norm  of $g$ and the $L^\infty$ norm 
of $w$. This also gives  enough control of $u'$ to deduce that $u(R)/ R^{1/2}$ is controlled
by $E^{+,R}(u,w)$.

\item We finally bound $u(1)$ by a sublinear expression in $E^{+,R}(u,w)$.
This allows us to absorb the boundary terms and to obtain a lower bound
\begin{equation}
E^R(u,w) \ge \frac12 E^{+,R}(u,w) - C,
\end{equation}
for $R \ge R_0$. From this it easily follows that the limit $\lim_{R \to \infty} E^R(u,w)$ exists
and moreover that this limit is finite if and only if $E^+(u,w)$ is finite, see 
Lemma \ref{cor:welldef}.
\item With these preparations we deduce the existence of minimizers in the usual way by the direct method of
the calculus of variations.
\end{enumerate}

We begin by rewriting the renormalized energy. We have  $\hat w = w + 1$ and $\hat u = u + \frac{1}{2r}$ for $r \ge 1$ 
and thus the renormalized energy  (cf.~eq.~\eqref{eq:13}) simplifies  for $r \ge 1$  to
\begin{align}   
& \rel(r)-\frac{1}{r^2}=
     \left(2w+w^2+u'-\frac{1}{2r^2}\right)^2+   \left(\frac{u+1/(2r)}{r}\right)^2+w'^2+\frac{2w+w^2}{r^2}\non
 \\ 
= &  \left(2w+w^2+u'\right)^2  - \frac{1}{r^2} (2 w + w^2 + u') + \frac{1}{4 r^4} + \left(\frac{u}{r}\right)^2 
+ \frac{u}{r^3} + \frac{1}{4 r^4} +  w'^2 + \frac{2w+w^2}{r^2}  \non \\
= &  \left(2w+w^2+u'\right)^2  + \left(\frac{u}{r}\right)^2  + w'^2 - \frac{u'}{r^2} +  \frac{u}{r^3} + \frac{1}{2 r^4}
\non \\ 
= & \left(2w+w^2+u'\right)^2+\left(\frac{u}{r}\right)^2+w'^2-\frac{1}{r}\left(\frac{u}{r}\right)'+\frac{1}{2r^4}
\label{eq:renorm_positive2}
\,.
\end{align}

\bigskip 
The first three terms in   \eqref{eq:renorm_positive2} are positive,  the last term is harmless (since it is integrable with respect to
the measure $r \d r$) and the last but one term produces a boundary term $- u(R)/R + u(1)$ when
integrated against $r \d r$.

\begin{definition}
\label{def:EE} For $0 \le a < 1 < R \le \infty$ define
\begin{align}
E^+(u,w; (a,1))  &:= \int_a^1  r \d r   \left[ ( \hat w^2 - 1 + \hat u')^2  + \frac{\hat u^2}{r^2} + \hat w'^2 + \frac{\hat w^2}{r^2} \right], 
\label{eq:Ea1} \\
E^+(u,w; (1, R)) &:= 
 \int_1^R r\d
r\left[(2w+w^2+u')^2+\frac{u^2}{r^2}+w'^2\right]\label{eq:E1R}\\
E^{+,R}(u,w) & := E^+(u,w; (0,1)) + E^+(u,w; (1, R))  \label{eq:E+R}  \\
E^+(u,w) & :=  E^{+,\infty}
\end{align}
where $\hat w(r) = w(r) +\psi(r)$, $\hat u(r) = u(r)  + \frac{\psi(r)}{2r}$.
\end{definition}
Note that by positivity of the integrand and the monotone convergence theorem the limit 
$\lim_{R \to \infty} E^{+,R}(u,w)$ exists in  $\R \cup \{\infty\}$
for all $(u,w) \in \W$ and
agrees with $E^+(u,w)$.

\bigskip 

From the formula   \eqref{eq:renorm_positive2} for the renormalized energy we get
\begin{align}
E^1(u,w) & = E^+(u,w; (0,1)) - \int_0^1 r \d r \, \frac{\psi^2}{r^2} 
\label{eq:E1renorm} \\
E^R(u,w) & = E^{+,R}(u,w) + u(1) - \frac{u(R)}{R} + \frac14 (1 - R^{-2}) - \int_0^1 r \d r \, \frac{\psi^2}{r^2}
\label{eq:ERrenorm}
\end{align}

We want to show that $E^R \ge \frac12 E^{+,R} - C$ for $R \ge R_0$ and that $\lim_{R \to \infty} u(R)/ R = 0$
if $E^+(u,w) < \infty$. One difficulty is that the functional $E^+(u,w)$ is not obviously coercive. 
We get immediately bounds for $u$ and $w'$, but there are no direct bounds for $u'$ and $w$. 
We will obtain bounds on $u'$ and $w$ from an interpolation result. 
To state it, it is more convenient to make the change of variables 
$R = e^T$, $\tilde u (t) = u(e^t)$ and $\tilde w(t) = w(e^t)$. Then
\begin{align} 
E^+(u,w;(1,R)) &= \int_1^R r\d r\left[(2w+w^2+u')^2+\frac{u^2}{r^2}+w'^2\right]   \non \\
& = \int_0^T \d t  \left[  \left( e^t (2 \tilde w + \tilde w^2) +  \tilde u' \right)^2 +  \tilde u^2 +\tilde w'^2  \right] 
 \label{eq:trafonew1}
\end{align}

\begin{lemma}
\label{lem:interpol} Let $T \in [1, \infty]$,  let $I = (0, T)$ and suppose that
$f , g\in W^{1,2}_{\rm loc}(I)$ and
\begin{equation}
  e^{t/2} g + e^{-t/2} f'  \in L^2(I), \quad f \in L^2(I), \quad g' \in L^2(I).
\end{equation}
Then
\begin{equation}
g \in L^2(I), \quad e^{-t} f' \in L^2(I)
\end{equation}
and
\begin{equation}  \label{eq:interpolation1}
\| g\|_{L^2}^2  + \|e^{-t} f'\|_{L^2}^2 \le C  \left( \|  e^{t/2} g+  e^{-t/2} f' \|_{L^2}^2  +  \|f \|_{L^2}^2  +  \|g'\|_{L^2}^2 \right).
\end{equation}
\end{lemma}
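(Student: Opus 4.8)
The plan is to reduce \eqref{eq:interpolation1} to a single coercive estimate for $g$ and to isolate the one genuinely delicate contribution, which sits at the left endpoint $t=0$. I write $h:=e^{t/2}g+e^{-t/2}f'$, so that $h\in L^2(I)$ by hypothesis, and observe the algebraic identity $g=e^{-t/2}h-e^{-t}f'$. This shows that, modulo $h\in L^2$, the quantities $g$ and $e^{-t}f'$ control one another: once $\|g\|_{L^2}$ is bounded, $\|e^{-t}f'\|_{L^2}^2\le 2\|h\|_{L^2}^2+2\|g\|_{L^2}^2$ delivers the remaining half of the conclusion. It therefore suffices to bound $\|g\|_{L^2(I)}$. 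To that end I test the defining relation against $g$ and integrate the cross term by parts: for $0<s<\tau<T$,
\[
\int_s^\tau e^{t/2}g^2\,\d t=\int_s^\tau g\,h\,\d t+\int_s^\tau e^{-t/2}g'f\,\d t-\tfrac12\int_s^\tau e^{-t/2}gf\,\d t-\Big[e^{-t/2}gf\Big]_s^\tau.
\]
Since $e^{t/2}\ge 1$ on $I$, the left side dominates $\int_s^\tau g^2$, so a bound on it suffices.

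The three bulk integrals are harmless. By Cauchy--Schwarz, $\int g\,h\le(\int e^{t/2}g^2)^{1/2}\|h\|_{L^2}$ and $\tfrac12\int e^{-t/2}gf\le\tfrac12(\int e^{t/2}g^2)^{1/2}\|f\|_{L^2}$, so after Young's inequality a small fixed multiple (say $\tfrac12$) of $\int e^{t/2}g^2$ is absorbed into the left side, while $\int e^{-t/2}g'f\le\|g'\|_{L^2}\|f\|_{L^2}$ is already controlled. The right boundary term is $e^{-\tau/2}g(\tau)f(\tau)$, and for $T=\infty$ the weight $e^{-\tau/2}$ does the work: from $g'\in L^2$ one has $|g(\tau)|\le|g(1)|+\tau^{1/2}\|g'\|_{L^2}$, and $f\in L^2(0,\infty)$ furnishes a sequence $\tau_n\to\infty$ with $\tau_n f(\tau_n)^2\to0$ (hence $f(\tau_n)\to0$), along which the right boundary term tends to $0$; letting $n\to\infty$ by monotone convergence closes the $\tau$-limit. (For finite $T$ the right endpoint is handled by the same bounded-interval estimate used at $t=0$ below.)

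What remains is the left endpoint value $e^{-s/2}g(s)f(s)\to g(0)f(0)$ as $s\to0^+$; both limits exist, since $g'\in L^2$ near $0$ and since the relation forces $f'=e^{t/2}h-e^{t}g\in L^2(0,1)$, so $g$ and $f$ extend continuously to $0$. \emph{This term is the main obstacle}: $g(0)f(0)$ is not dominated by the right side in any elementary way, and naive estimates reintroduce $\int e^{t/2}g^2$ with an $O(1)$ constant that cannot be absorbed. I resolve it by first establishing the bounded-interval interpolation
\[
\|g\|_{L^2(0,1)}+\|f'\|_{L^2(0,1)}\le C\big(\|h\|_{L^2(0,1)}+\|f\|_{L^2(0,1)}+\|g'\|_{L^2(0,1)}\big),
\]
which is the genuine $H^1$--$H^{-1}$ interpolation content on a compact interval and involves no global tail. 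I would prove it by compactness--contradiction: were it to fail, there would exist $(f_n,g_n)$ with $\|g_n\|_{L^2(0,1)}=1$ and $\|h_n\|_{L^2(0,1)}+\|f_n\|_{L^2(0,1)}+\|g_n'\|_{L^2(0,1)}\to0$; then, by compact embedding, $g_n\to c$ strongly in $L^2$ with $c$ constant and $|c|=1$ (because $g_n'\to0$), while $e^{-t/2}f_n'=h_n-e^{t/2}g_n\to-c\,e^{t/2}$ gives $f_n'\to-c\,e^{t}\neq0$ in $L^2$, contradicting $f_n\to0$ in $L^2$. Granting this, the elementary continuity bounds $g(0)^2\le 2\|g\|_{L^2(0,1)}^2+\|g'\|_{L^2(0,1)}^2$ and $f(0)^2\le 2\|f\|_{L^2(0,1)}^2+\|f'\|_{L^2(0,1)}^2$ control $|g(0)f(0)|$ purely by $\|h\|_{L^2}^2+\|f\|_{L^2}^2+\|g'\|_{L^2}^2$, with no dependence on the quantity being absorbed.

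Combining the absorption of the bulk terms with these endpoint bounds yields $\int_I e^{t/2}g^2\,\d t\le C\big(\|h\|_{L^2}^2+\|f\|_{L^2}^2+\|g'\|_{L^2}^2\big)$, hence $\|g\|_{L^2(I)}^2\le C(\cdots)$, and then $\|e^{-t}f'\|_{L^2}^2\le 2\|h\|_{L^2}^2+2\|g\|_{L^2}^2\le C(\cdots)$ via $g=e^{-t/2}h-e^{-t}f'$, which is exactly \eqref{eq:interpolation1}. Thus the far field is comparatively soft—the exponential weights either suppress the boundary contribution (for $T=\infty$) or render the interval bounded (for $T<\infty$)—and the one point requiring real care is the compact-interval interpolation controlling $g(0)$ and $f(0)$.
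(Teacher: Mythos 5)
Your route is genuinely different from the paper's. The paper introduces the antiderivative-type function $G(t)=-\int_t^T \d s\, e^{-s/2}\bigl(e^{-s/2}f'+e^{s/2}g\bigr)+\int_t^T \d s\, e^{-s}f - e^{-t}f(t)$, which satisfies $G'=g$, $G''=g'$, bounds $\|G\|_{L^2}$ directly by Cauchy--Schwarz, and concludes from the local interpolation $\|G'\|^2_{L^2((a,a+1))}\le C\bigl(\|G\|^2_{L^2((a,a+1))}+\|G''\|^2_{L^2((a,a+1))}\bigr)$ summed over unit intervals; this construction produces no boundary terms at all. You instead test the relation $h=e^{t/2}g+e^{-t/2}f'$ against $g$, integrate by parts, absorb the bulk terms, and control the two boundary contributions. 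For $T=\infty$ your argument is essentially complete: the identity is correct, the absorption is legitimate on compact subintervals, the left endpoint is handled by your compact-interval interpolation on $(0,1)$ (whose compactness proof is sound), and the sequence $\tau_n\to\infty$ with $f(\tau_n)\to 0$ kills the right boundary term.

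The genuine gap is the finite-$T$ case, which you dismiss in one parenthesis, and it is not a corner case: the paper applies the lemma in Lemma \ref{lem:boundwg} on $(0,T)$ with $T=\ln R$ for arbitrary $R>1$, and the constant must be independent of $T$ for the limit $R\to\infty$ in Lemma \ref{cor:welldef} to go through. At $t=T$ your recipe (``the same bounded-interval estimate used at $t=0$'') would require controlling $f(T)$, hence $\|f'\|_{L^2((T-1,T))}$, by $C\bigl(\|h\|+\|f\|+\|g'\|\bigr)_{L^2((T-1,T))}$ with $C$ independent of $T$. That estimate is false: take on $(T-1,T)$ the pair $f(t)=\delta\sin\bigl(e^{T/2}(t-T+1)\bigr)$, $g=-e^{-t}f'$; then $h\equiv 0$, $\|f\|_{L^2}\le\delta$, and $\|g'\|_{L^2}\le C\delta$ (since $g'=e^{-t}(f'-f'')$ and $|f''|\le\delta e^{T}$), but $\|f'\|_{L^2}\ge \tfrac12\delta e^{T/2}$. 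The point is the asymmetry of the weights: at $t=0$ both weights are of order one, so $f'$ itself is controlled there; near $t=T$ only $e^{-t}f'$ is controlled, and the boundary weight $e^{-T/2}$ does not compensate --- you lose a factor $e^{T/2}$. The boundary term $e^{-T/2}g(T)f(T)$ is in fact bounded by the right-hand side (it must be, since the lemma is true), but proving this needs a further idea beyond what you wrote; for instance, first establish $\sup|g|\le CK$, where $K$ denotes the square root of the right-hand side of \eqref{eq:interpolation1}, via translated unit-interval estimates, then bound $f(T)$ by integrating $f'=e^{t/2}h-e^{t}g$ from a point $\tau\in(T-2\delta,T-\delta)$ with $\delta=e^{-T/2}$, chosen by pigeonhole so that $|f(\tau)|\le\delta^{-1/2}\|f\|_{L^2}$. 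As written, your proposal proves the lemma only for $T=\infty$.
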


\paragraph{Remark.} For $T=\infty$ one can also prove a bound with the optimal exponential rate: the $L^2$ norms of $e^{t/2} g$
and $e^{-t/2} f'$ are bounded in terms by  a constant times $\|e^{-t/2} f' + e^{t/2} g\|_{L^2}  +  \|f \|_{L^2} +\|g'\|_{L^2}$.
To see this one 
uses the identity
$( e^{-t/2} f')^2 + ( e^{t/2} g)^2 = ( e^{-t/2} f' + e^{t/2} g)^2 - 2 f' g$ and integration by parts.
The following example shows that one cannot estimate $e^{\beta t} g$ for any $\beta > \frac12$. 
Let  $\alpha > 0$ and
  $f = 2 e^{-\alpha t}  \sin e^{t/2}$, $g=  - e^{-\alpha t } e^{-t/2} \cos e^{t/2}$. Then $f$, $e^t g+f'$ and $g'$ are in $L^2((0, \infty))$
  but $e^{\beta t} g \notin L^2((0, \infty))$ if $\beta \ge \frac12 + \alpha$.

  \begin{proof} Let 
  \begin{equation}
G(t) := - \int_t^T  \d s \,  e^{-s/2} (e^{-s/2} f'(s) + e^{s/2} g)  +  \int_t^T \d s  \, e^{-s} f(s)   - e^{-t} f(t).    
\end{equation}
Note that both  integrals exist  (even for $T= \infty$) since $ e^{-s/2} f'(s) + e^{s/2} g \in L^2$, $f \in L^2$ and $e^{-s/2} \in L^2$. 
Moreover $G$ is absolutely continuous and for a.e.\  $t$ we have
\begin{equation}
G'(t) = e^{-t} f'(t) + g(t)  - e^{-t} f(t) -   (e^{-t} f(t))' =g(t).
\end{equation}
Thus $G \in W^{2,2}_{loc}(I)$ and $G'' = g'$.
Moreover by the Cauchy-Schwarz inequality
\begin{equation}
|G(t)| \le e^{-t/2}  \|e^{-s/2} f' + e^{s/2} g\|_{L^2} + e^{-t} \|f\|_{L^2} + e^{-t} f(t)
\end{equation}
and this implies that
\begin{equation}  \label{eq:GL2}
\|G \|_{L^2} \le   \|e^{-s/2} f'(s) + e^{s/2}\|_{L^2} + 2 \|f\|_{L^2}.
\end{equation}

For $a \in (0, T-1)$  we use the interpolation inequality
\begin{equation}
\|G' \|_{L^2((a, a+1)}^2 \le C  \left(  \|G\|_{L^2((a, a+1))}^2 + \|G''\|_{L^2((a,a+1))}^2 \right)
\end{equation}
For a proof see, e.g., ~\cite{gilbarg2001elliptic} or derive a contradiction from the 
assumptions $\|G'_j\|_{L^2((0,1))} = 1$ and $\|G_j\|_{L^2((0,1))}^2 + \|G''_j\|_{L^2((0,1))}^2 \to 0$. 
Passage to the limit $a \downarrow 0$ and $a \uparrow T-1$ (if $T < \infty$) shows that the inequality
also holds for $a= 0$ and $a=T-1$ (if $T < \infty$). If $T = \infty$ we sum the inequalities for $a \in \N$. 
If $T < \infty$ we denote by $[T]$ the integer part of $T$ and sum the inequalities for $a = 0, \ldots, [T] - 1$
and $a= T-1$. Since at most two of the intervals $(a, a+1)$ overlap we get
\begin{equation}
\|G' \|_{L^2}^2 \le  2 C  \left(   \|G\|_{L^2}^2 + \|G''\|_{L^2}^2 \right) 
\end{equation}
Since $G' =g$ and $G'' = g'$ the estimate for $\|g \|_{L^2}$ follows from \eqref{eq:GL2}.
The estimate for $e^{-t} f'$ follows from the triangle inequality since
$e^{-t} f'  = e^{-t/2} (e^{-t/2} f' + e^{t/2} g) - g$. 
  \end{proof}

We would like to apply the interpolation result with $f  = \tilde u$ and $g = 2 \tilde w + \tilde w^2$. 
We have $g' = 2 (1 + \tilde w) \tilde w'$ and $E^{+,R}$ controls only the $L^2$ norm of $\tilde w'$ 
and not directly the $L^2$ norm of $g'$. We thus simulataneously prove an $L^\infty$ bound for $\tilde w$
and an $L^2$ bound for $g$. 

\begin{lemma}  \label{lem:boundwg}
There exists a constant $C$ with the following property. If $R > 1$ and $(u,w) \in W^{1,2}_{\rm loc}([1, R))$ with
$E(R):=  E^+(u,w;(1,R)) < \infty$ then
\begin{align}
\sup_{[1, R]} |w|  & \le C (1 + E^{1/2}(R)), \label{eq:winfty} \\
\int_1^R  \frac{dr}{r}  \left[  (2 w + w^2)^2  + u'^2 \right]  & \le  C (1 + E(R)^2),
\label{eq:gL2} \\
R^{-1/2} |u(R)|  &  \le  C (1 + E(R)).  \label{eq:decayu}
\end{align}
\end{lemma}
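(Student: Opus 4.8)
The plan is to pass to the logarithmic variable $t=\log r$, $\tilde u(t)=u(e^t)$, $\tilde w(t)=w(e^t)$, $T=\log R$, so that by \eqref{eq:trafonew1} the finite quantity $E(R)=E^+(u,w;(1,R))$ controls the three $L^2$ norms $\|e^t(2\tilde w+\tilde w^2)+\tilde u'\|_{L^2(0,T)}$, $\|\tilde u\|_{L^2(0,T)}$ and $\|\tilde w'\|_{L^2(0,T)}$, each by $E^{1/2}(R)$. Writing $f=\tilde u$ and $g=2\tilde w+\tilde w^2=(1+\tilde w)^2-1$, and using $e^{-t}\le 1$ on $(0,T)$, this gives in particular $\|e^{t/2}g+e^{-t/2}f'\|_{L^2}\le E^{1/2}$ and $\|f\|_{L^2}\le E^{1/2}$, i.e.\ the first two hypotheses of Lemma \ref{lem:interpol}. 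The three target estimates, translated, read: $\sup_{(0,T)}|\tilde w|\le C(1+E^{1/2})$; then $\|g\|_{L^2}^2+\|e^{-t}\tilde u'\|_{L^2}^2\le C(1+E^2)$, which is exactly the left side of \eqref{eq:gL2} after undoing the change of variables; and finally $e^{-T/2}|\tilde u(T)|\le C(1+E)$.

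The main obstacle is the circular dependence flagged in the text: to apply Lemma \ref{lem:interpol} I need $g'=2(1+\tilde w)\tilde w'\in L^2$, which via $\|g'\|_{L^2}\le 2(1+M)\|\tilde w'\|_{L^2}\le 2(1+M)E^{1/2}$ requires an a priori bound on $M:=\sup_{(0,T)}|\tilde w|$; but $M$ is exactly what I want to estimate. I would break this by a self-improving (bootstrap) inequality. For fixed finite $T$ one has $M<\infty$, since $\tilde w$ is continuous with $\tilde w'\in L^2$, so the application of Lemma \ref{lem:interpol} is legitimate and yields $\|g\|_{L^2}^2+\|e^{-t}\tilde u'\|_{L^2}^2\le C(E+\|g'\|_{L^2}^2)\le CE(1+M)^2$. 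Feeding this into the one-dimensional interpolation inequality $\|g\|_{L^\infty(0,T)}^2\le C(\|g\|_{L^2}^2+\|g'\|_{L^2}^2)$ (valid with a uniform constant on intervals of length $\ge 1$, which is where one restricts to $R\ge R_0$) gives $\|g\|_{L^\infty}\le CE^{1/2}(1+M)$. Finally $\tilde w^2\le 2(1+\tilde w)^2+2=2g+4\le 2\|g\|_{L^\infty}+4$ closes the loop:
\begin{equation*}
M^2\le 4+2CE^{1/2}(1+M).
\end{equation*}
The point is that the left-hand side is quadratic in $M$ while the right-hand side is only linear, so this inequality forces $M\le C(1+E^{1/2})$ (solve the quadratic and use $E^{1/4}\le\tfrac12(1+E^{1/2})$). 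This is \eqref{eq:winfty}.

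With $M\le C(1+E^{1/2})$ in hand, the estimate \eqref{eq:gL2} is immediate: $(1+M)^2\le C(1+E)$, so the interpolation output becomes $\|g\|_{L^2}^2+\|e^{-t}\tilde u'\|_{L^2}^2\le CE(1+E)\le C(1+E^2)$, and undoing $t=\log r$ turns the two summands into $\int_1^R(2w+w^2)^2\,dr/r$ and $\int_1^R u'^2\,dr/r$. In particular $\|e^{-t}\tilde u'\|_{L^2}\le C(1+E)$, which I will use for the last estimate.

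For \eqref{eq:decayu} I would avoid any pointwise/spike estimate near $r=R$ (such estimates only yield the too-weak factor $R$ in place of $R^{1/2}$) and instead exploit a favourable sign. Starting from
\begin{equation*}
\frac{d}{dt}\left(e^{-t}\tilde u^2\right)=-e^{-t}\tilde u^2+2e^{-t}\tilde u\,\tilde u',
\end{equation*}
I integrate from a well-chosen $t_1\in(0,1)$ with $\tilde u(t_1)^2\le\|\tilde u\|_{L^2}^2\le E$ (mean value) up to $T$, discard the nonpositive term $-\int e^{-t}\tilde u^2$, and apply Cauchy--Schwarz to the cross term using the two controlled norms $\|\tilde u\|_{L^2}\le E^{1/2}$ and $\|e^{-t}\tilde u'\|_{L^2}\le C(1+E)$. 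This gives $e^{-T}\tilde u(T)^2\le E+2CE^{1/2}(1+E)\le C(1+E)^2$, i.e.\ $R^{-1/2}|u(R)|=e^{-T/2}|\tilde u(T)|\le C(1+E)$, which is \eqref{eq:decayu}. The hard part throughout is the first step: recognising that the interpolation inequality produces $\|g\|_{L^2}$ with a factor $(1+M)$, and that the quadratic relation between $M^2$ and $M$ is precisely what decouples the estimate; the remaining two bounds are then bookkeeping on top of the interpolation output.
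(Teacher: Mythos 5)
Your proposal is correct and takes essentially the same route as the paper's proof: pass to logarithmic variables, apply Lemma \ref{lem:interpol} with $f=\tilde u$, $g=2\tilde w+\tilde w^2$, break the circular dependence between $\sup|\tilde w|$ and $\|g'\|_{L^2}$ by a self-improving inequality exploiting the quadratic relation between $g$ and $\tilde w$ (the paper closes this step with Young's inequality, absorbing $M^2E(R)$ into $\tfrac18 M^4$, while you solve a quadratic inequality in $M$ --- a cosmetic difference), and then obtain \eqref{eq:gL2} and \eqref{eq:decayu} from the interpolation output just as the paper does. Your explicit restriction to intervals of length at least $1$ (i.e.\ $\log R\ge 1$) is not a defect relative to the paper: its own argument needs the same restriction, since Lemma \ref{lem:interpol} is stated only for $T\in[1,\infty]$ and the step $\frac1T\int_0^T g^2\,\d t\le\int_0^T g^2\,\d t$ uses $T\ge1$.
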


\begin{proof} Let  $R = e^T$, $\tilde u(t) = u(e^t)$, $\tilde w(t) = w(e^t)$ and $g = 2 \tilde w + \tilde w^2$. 
To prove   \eqref{eq:winfty} we will assume in addition that $w \in L^\infty((1,R))$. This is no loss 
of generality since by the Sobolev embedding theorem $w \in L^\infty((1,R - \varepsilon))$ for all $\varepsilon$
positive. If we have  \eqref{eq:winfty} with  $R - \varepsilon$  instead of $R$ for all $\varepsilon > 0$ we can then consider the limit
$\varepsilon \downarrow 0$ to obtain the estimate for $R$. 

Let
\begin{equation}
M := \sup_{[0,T]} |\tilde w| = \sup_{[1,R]}|w|.
\end{equation}
If $M < 4$ there is nothing to show. We may thus assume $M \ge 4$. Then
\begin{equation}  \label{eq:boundgprime}
\frac12 M^2 \le \sup g,   \quad |g'| \le | 2 (1 + \tilde w) \tilde w'| \le 4 M |\tilde w'|.
\end{equation}
By  \eqref{eq:trafonew1} we have
\begin{equation}
E(R) =  \int_0^T \d t  \left[  \left( e^t g +  \tilde u' \right)^2 +  \tilde u^2 +\tilde w'^2  \right]. 
 \end{equation}
Thus arguing  as in Lemma \ref{lem:estsupH1a} and using the interpolation estimate with $f = \tilde u$  we get 
\begin{align}
\frac14 M^4 & \le \inf_{[0,T]} g^2 + ( \sup_{[0,T]} g^2 -  \inf_{[0,T]} g^2) 
\le \frac{1}{T}  \int_0^T \d t \, g^2  +     \int_0^T \d t \, g^2 +   \int_0^T \d t \, g'^2  \\
& \le 2 C  \int_0^T  \d t  \, \left[(e^{-t/2} \tilde u' + e^{t/2} g)^2 + \tilde u^2\right]  +   (2 C + 1)  \int_0^T \d t \, g'^2  \\
& \le  C  E(R)  + 16 M^2  (2 C+1)   E(R) \le \frac18 M^4   + C ( 1 +  E(R)^2),
\end{align}
where we  used Young's inequality $a b \le \frac18 a^2 + 2 b^2$. 
This implies    \eqref{eq:winfty}.

Now \eqref{eq:gL2} follows directly from the interpolation estimate, \eqref{eq:winfty} and 
 \eqref{eq:boundgprime}. Indeed we have
 \begin{align}
& \int_1^R   \frac{\d r}{r}  (2 w  + w^2)^2 = \int_0^T  \d t \, g^2  \non \\
 \le &  C  \int_0^T  \d t  \, \left[ (e^{-t/2} \tilde u' + e^{t/2} g)^2 + \tilde u^2  + g'^2 \right]  \non  \\
\le &  C (1 + E(R)) \,  E(R).
\end{align}
and the bound for $u'$ follows by the triangle inequality since $\int_1^R  r \d
r  (2 w + w^2  + u')^2 \le E(R)$ and $r^{-1}\leq r$ on $[1,\infty)$.

Using again the interpolation inequality and the $L^\infty$ bound for $w$ we get
\begin{align}
&  R^{-1} u^2(R) \le  \sup_{[0,T]}  e^{-t} \tilde u^2 
=  \inf_{[0,T]}   e^{-t} \tilde u^2 + ( \sup_{[0,T]}   e^{-t} \tilde u^2 -  \inf_{[0,T]} e^{-t} \tilde u^2)   \non \\
  \le &  \frac{1}{T}  \int_0^T  \d t \, e^{-t}  \tilde u^2 +   2 \int_0^T \d t  \,    \left( e^{-t} \tilde u^2 \right)'    \non \\
 \le &  3    \int_0^T  \d t \, e^{-t}  \tilde u^2  +     \int_0^T  \d t \,   \tilde u^2  +    \int_0^T  \d t \, e^{-2t}  \tilde u'^2  
  \le   C ( 1 +  E(R)^2).
\end{align}
Taking the square root we get \eqref{eq:decayu}.
\end{proof}

\begin{lemma}
\label{cor:welldef}
There exists a constant $C$  and $R_0 \ge 1$ such that for all $R \in [R_0, \infty)$ and all $(u,w) \in \W$ we have
\begin{equation}  \label{eq:lowerER}
E^R(u,w) \ge \frac12 E^{+,R}(u,w) - C.
\end{equation}
Moreover for all $(u,w) \in \W$ the limit
\begin{equation}
E(u,w) := \lim_{R \to \infty} E^R(u,w)
\end{equation}
exists in $\R \cup \{  \infty\}$ and
\begin{equation}
E(u,w) < \infty \quad \Longleftrightarrow \quad E^+(u,w) < \infty.
\end{equation}
In addition, if $E(u,w) < \infty$ then
\begin{equation}   \label{eq:identE+}
E(u,w) = E^+(u,w) + u(1) + \frac14 - \int_0^1r  \d r \frac{\psi^2}{r^2}.
\end{equation}
\end{lemma}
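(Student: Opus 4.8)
The plan is to read everything off the exact renormalization identity \eqref{eq:ERrenorm}, which writes $E^R(u,w)$ as the manifestly nonnegative functional $E^{+,R}(u,w)$ plus the correction $u(1)-\frac{u(R)}{R}+\frac14(1-R^{-2})-\int_0^1 r\,\d r\,\frac{\psi^2}{r^2}$. The two explicit terms are bounded by an absolute constant (recall $\psi=0$ near the origin, so the last integral converges), so the whole problem collapses to controlling the boundary values $u(1)$ and $u(R)/R$. Concretely, once I show $|u(1)|+\frac{|u(R)|}{R}\le \frac12 E^{+,R}(u,w)+C$ for $R\ge R_0$, the lower bound \eqref{eq:lowerER} is immediate. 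Note that for every finite $R$ one has $E^{+,R}(u,w)<\infty$ for $(u,w)\in\W$ (the inner part is finite by definition of $\W$, the outer part because $u,w\in W^{1,2}_{\rm loc}$), so Lemma \ref{lem:boundwg} applies on $(1,R)$.

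The outer term is the easy half. Estimate \eqref{eq:decayu} gives $R^{-1/2}|u(R)|\le C(1+E(R))$ with $E(R)=E^+(u,w;(1,R))\le E^{+,R}(u,w)$, hence $\frac{|u(R)|}{R}\le C R^{-1/2}(1+E^{+,R})$. The gain is the surplus factor $R^{-1/2}$: fixing $R_0$ so large that $C R_0^{-1/2}\le\frac14$ yields $\frac{|u(R)|}{R}\le \frac14 E^{+,R}+\frac14$ for all $R\ge R_0$.

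The inner term $u(1)$ is the main obstacle, precisely because there is no small factor available. The naive route through the additive embedding $\sup_{[1,2]}|u|\le C(\|u\|_{L^2(1,2)}+\|u'\|_{L^2(1,2)})$ fails: \eqref{eq:gL2} controls $\|u'\|_{L^2(1,2)}$ only \emph{linearly} in $E^{+,R}$, producing a linear bound on $u(1)$ whose constant cannot be absorbed into $\frac12 E^{+,R}$. The remedy is the \emph{multiplicative} interpolation. The $u^2/r^2$ term of $E^+(u,w;(1,R))$ controls $\int_1^2 u^2\,\d r\le 2E^{+,R}$, so by the fundamental theorem of calculus together with a mean-value choice of base point (exactly the argument of Lemma \ref{lem:estsupH1a}) I obtain $\sup_{[1,2]}u^2\le \int_1^2 u^2\,\d r+2\|u\|_{L^2(1,2)}\|u'\|_{L^2(1,2)}$. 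Inserting $\|u\|_{L^2(1,2)}\le C(E^{+,R})^{1/2}$ and $\|u'\|_{L^2(1,2)}\le C(1+E^{+,R})$ from \eqref{eq:gL2} turns the geometric mean into the \emph{sublinear} estimate $|u(1)|\le C(1+(E^{+,R})^{3/4})$; this is the step where the linear $u'$-bound is downgraded to a sublinear bound on $u(1)$. Young's inequality then gives $|u(1)|\le \frac14 E^{+,R}+C$, and adding the outer estimate proves \eqref{eq:lowerER}.

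It remains to pass to the limit $R\to\infty$ in \eqref{eq:ERrenorm}. By monotone convergence $E^{+,R}(u,w)\nearrow E^+(u,w)$ in $\R\cup\{\infty\}$. If $E^+(u,w)<\infty$ then $E(R)$ is bounded uniformly in $R$, so \eqref{eq:decayu} forces $\frac{|u(R)|}{R}\le C R^{-1/2}(1+E^+)\to0$; every remaining term in \eqref{eq:ERrenorm} converges, giving $E^R(u,w)\to E^+(u,w)+u(1)+\frac14-\int_0^1 r\,\d r\,\frac{\psi^2}{r^2}$, which is \eqref{eq:identE+}. If instead $E^+(u,w)=\infty$, then \eqref{eq:lowerER} gives $E^R(u,w)\ge\frac12 E^{+,R}(u,w)-C\to\infty$. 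In both cases the limit $E(u,w)$ exists in $\R\cup\{\infty\}$ and is finite exactly when $E^+(u,w)<\infty$, completing the proof. I expect the only genuinely delicate point to be the sublinear control of $u(1)$ described above; everything else is bookkeeping around the identity \eqref{eq:ERrenorm}.
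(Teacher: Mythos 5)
Your proof is correct, and it follows the paper's overall skeleton — the identity \eqref{eq:ERrenorm}, the bound on $u(R)/R$ via \eqref{eq:decayu} with absorption for large $R$, and the monotone-convergence limit argument are all the same — but you handle the one genuinely delicate step, the sublinear bound on $u(1)$, by a different route. The paper estimates $\hat u(1)$ from the \emph{inner} energy $X_1 = E^+(u,w;(0,1))$: by Lemma \ref{lem:estsupH1a} one has $|\hat u(1)|^2 \le 2\|\hat u\|_{L^2(\d r/r)}\|\hat u'\|_{L^2(r\d r)}$, and splitting $\hat u' = (\hat w^2-1+\hat u') - (\hat w^2 -1)$ together with $\sup_{(0,1)}\hat w^2 \le X_1$ gives $|\hat u(1)|^2 \lesssim X_1^{1/2}(1+X_1)$, hence $|u(1)| \le C + \frac14 X_1$. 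You instead estimate $u(1)$ from the \emph{outer} energy on $(1,2)$: the weighted $u^2/r^2$ term gives $\|u\|_{L^2(1,2)} \lesssim (E^{+,R})^{1/2}$, the derived bound \eqref{eq:gL2} gives $\|u'\|_{L^2(1,2)} \lesssim 1+E^{+,R}$, and the multiplicative (mean-value plus Cauchy--Schwarz) interpolation turns the product into the sublinear $|u(1)| \lesssim 1+(E^{+,R})^{3/4}$ — the same $3/4$-power as in the paper, so Young's inequality absorbs it identically. What each buys: the paper's inner-region argument is more elementary at this point (it needs only Lemma \ref{lem:estsupH1a} and the definition of $\W$, not the interpolation machinery behind Lemma \ref{lem:boundwg}), whereas your argument never touches the hatted variables or the region $(0,1)$ and reuses Lemma \ref{lem:boundwg}, which you already need for the $u(R)/R$ term — so it is arguably more economical, at the cost of resting the whole lemma on the heavier Lemma \ref{lem:boundwg}. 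Two small points to tidy up: you need $R \ge 2$ so that $(1,2)\subset (1,R)$ (harmless, since $R_0$ is large anyway, but it should be said), and your correct observation that $E^{+,R}(u,w)<\infty$ for finite $R$ (inner part by definition of $\W$, outer part by $W^{1,2}_{\rm loc}\hookrightarrow C^0$ on compact subsets of $(0,\infty)$) deserves to be kept, since it is what makes the quantities in Lemma \ref{lem:boundwg} finite and the absorption non-vacuous.
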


\begin{proof}
The starting point is the relation \eqref{eq:ERrenorm}
\begin{equation} \label{eq:ERrenorm2}
E^R(u,w)  = E^{+,R}(u,w) + u(1) - \frac{u(R)}{R} + \frac14 (1 - R^{-2}) - \int_0^1 r \d r \, \frac{\psi^2}{r^2}
\end{equation}
Note that with the notation of Lemma \ref{lem:boundwg} we
have
\begin{equation}
E^{+,R}(u,w) = E(R) + X_1, \quad \mbox{where   }  X_1 := E^+(u,w;(0,1)) \ge 0.
\end{equation}
By \eqref{eq:decayu}
\begin{equation}  \label{eq:estuR}
\frac{|u(R)|}{R} \le R^{-1/2} C (1 + E(R)) \le \frac14  E(R) + C
\end{equation}
if $R \ge R_0 := 4C$. 

Let $I=(0,1)$. By Lemma \ref{lem:estsupH1a}
\begin{align}
|\hat u(1)|^2 & \le  2 \| \hat u \|_{L^2(I;\d r/r)} \, \| \hat u' \|_{L^2(I; r \d r)} \\
&\le  2  \| \hat u \|_{L^2(I;\d r/r)}  \left(  
 \| \hat u' + \hat w^2 - 1\|_{L^2(I; r \d r)} + 
\|  \hat w^2 - 1 \|_{L^2(I; r \d r)}  \right)\\
& \le  X_1 + 2 X_1^{1/2}  \|  \hat w^2 - 1 \|_{L^2(I; r \d r)}.
\end{align}
Again by Lemma \ref{lem:estsupH1a} we have
$\sup_{[0,1]} \hat w^2 \le X_1$.  Thus
\begin{equation}
 \|  \hat w^2 - 1 \|_{L^2(I; r \d r)} \leq \sup |\hat w^2 - 1| \le (1 + X_1)
\end{equation}
and therefore $|\hat u(1)|^2 \le  (2 X_1^{1/2} + X_1 + 2  X_1^{3/2})$.
Using Young's inequality $ab \le \frac13 a^3 + \frac23 b^{3/2}$ first with
$(a,b)=(X_1^{1/2},1)$ and then with $(a,b)=(1,X_1)$
we get  $|\hat u(1)|^2 \le 4 (1 + X_1^{3/2})$.
 Finally we get
\begin{equation} \label{eq:estu1}
|u(1)|   = |\hat u(1) -\frac12|   \le  3  + 2  X_1^{3/4} \le 3 +  \frac14  8^4 + \frac{1}{4} X_1,
\end{equation}
where we used $a b \le \frac34 a^{4/3} + \frac14 b^4$ with $a = \frac14 X^{3/4} $ and $b = 8$.

Combining this with \eqref{eq:estuR} and  \eqref{eq:ERrenorm2}
and using that $X_1 \le E^{+,R}(u,w)$ and $E(R) \le E^{+,R}(u,w)$ we obtain
\eqref{eq:lowerER}  (for $R \ge R_0$).

Now if $E^+(u,w) = \infty$ then it follows from \eqref{eq:lowerER} that 
$\lim_{R \to \infty} E^{R}(u,w) = \infty$.  Assume now
$E^+(u,w) < \infty$. Since   $E^{+,R}(u,w) \le E^+(u,w)$ it follows from Lemma \ref{lem:boundwg}
that  $\lim_{R \to \infty} u(R)/R = 0$. In view of 
 \eqref{eq:ERrenorm2}  we deduce that that $\lim_{R \to \infty} E^{R}(u,w)$ exists and
 \begin{equation}
E(u,w) = E^+(u,w) + u(1) + \frac14 - \int_0^1r  \d r \frac{\psi^2}{r^2} < \infty.
\end{equation}
\end{proof}


\begin{cor}
For the unrenormalized energy $I_\he$ (cf.~eq.~\eqref{eq:11}), 
\[
|\log \he|-C \leq \he^{-2}\inf I_\he \leq|\log \he|+C\,\quad\text{ for all }\lambda\in(0,R_0^{-1}).
\]
\end{cor}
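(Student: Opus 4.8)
The plan is to reduce the finite-radius estimate to the $\lambda=1$ theory already developed, using the exact scaling identity \eqref{eq:covhe} and reading off the logarithm from the explicit renormalization term. First I would split $I_\lambda$ into the renormalized energy and the subtracted divergent part. By the definition of $\hat E^R_\lambda$ in \eqref{Rfinhat} (with $R=1$) one has
\[
I_\lambda = \hat E^1_\lambda(\hat u,\hat w) + \lambda^2\int_0^1\frac{\psi(r/\lambda)^2}{r}\,\d r ,
\]
and the substitution $s=r/\lambda$, together with $\psi(s)=0$ near $s=0$, $\psi(s)=1$ for $s\ge 1$, and $\lambda<1$, gives
\[
\int_0^1\frac{\psi(r/\lambda)^2}{r}\,\d r=\int_0^{1/\lambda}\frac{\psi(s)^2}{s}\,\d s=|\log\lambda|+C_\psi ,
\]
where $C_\psi:=\int_0^1\psi(s)^2/s\,\d s$ is a finite $\psi$-dependent constant. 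Hence $\lambda^{-2}I_\lambda=\lambda^{-2}\hat E^1_\lambda(\hat u,\hat w)+|\log\lambda|+C_\psi$.

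Next I would apply the change-of-variables formula \eqref{eq:covhe}, which reads $\hat E^1_\lambda(\hat u,\hat w)=\lambda^2\,\hat E^{1/\lambda}_1(\lambda^{-1}\hat u(\lambda\,\cdot),\hat w(\lambda\,\cdot))$. Since the map $(\hat u,\hat w)\mapsto(\lambda^{-1}\hat u(\lambda\,\cdot),\hat w(\lambda\,\cdot))$ is a bijection of the admissible class that preserves the continuity condition at the origin, passing to infima yields
\[
\lambda^{-2}\inf I_\lambda=\inf_{(u,w)}E^{1/\lambda}(u,w)+|\log\lambda|+C_\psi ,
\]
where $E^{1/\lambda}$ is the $\lambda=1$ renormalized functional on $(0,1/\lambda)$. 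It then remains to sandwich $\inf E^{1/\lambda}$ between two constants, and the hypothesis $\lambda\in(0,R_0^{-1})$ enters precisely to guarantee $R:=1/\lambda\ge R_0$.

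For the lower bound I would invoke Lemma \ref{cor:welldef}: since $R=1/\lambda\ge R_0$, inequality \eqref{eq:lowerER} gives $E^{1/\lambda}(u,w)\ge\tfrac12 E^{+,1/\lambda}(u,w)-C\ge -C$ by positivity of $E^{+}$, so $\inf E^{1/\lambda}\ge -C$ and thus $\lambda^{-2}\inf I_\lambda\ge|\log\lambda|-C$. For the upper bound I would evaluate the functional on the conical test configuration $u\equiv 0$, $w\equiv 0$ (that is, $\hat w=\psi$, $\hat u=\psi/(2r)$). Plugging this into \eqref{eq:ERrenorm}, the boundary terms $u(1)$ and $u(R)/R$ vanish, the term $E^+(0,0;(1,R))$ vanishes, and $E^{+,1/\lambda}(0,0)=E^+(0,0;(0,1))$ is a finite $\lambda$-independent constant $C_0$ because $\psi$ vanishes near the origin (so $\psi/(2r)$ and its derivative stay bounded there). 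The remaining $-\int_0^1 r\,\d r\,\psi^2/r^2=-C_\psi$ exactly cancels the $+C_\psi$ above, leaving $\lambda^{-2}\inf I_\lambda\le|\log\lambda|+C_0+\tfrac14$. Combining the two bounds proves the claim.

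The main obstacle here is organizational rather than analytic: essentially all the difficulty is already absorbed into Lemma \ref{cor:welldef}, whose uniform lower bound $E^R\ge -C$ is the genuinely hard ingredient. Granting that lemma, the only points requiring care are (i) the bookkeeping of the renormalization integral, which must reproduce exactly the coefficient $1$ in front of $|\log\lambda|$ and show the two $C_\psi$ contributions cancel in the upper bound; and (ii) verifying that the scaling \eqref{eq:covhe} genuinely identifies the two minimization problems, so that the infimum over the sheet of reference radius $1$ at parameter $\lambda$ coincides with the infimum of the $\lambda=1$ functional over the disc of radius $1/\lambda$.
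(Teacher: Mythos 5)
Your proof is correct and follows essentially the same route as the paper: split $I_\he$ into the renormalized energy plus the explicitly computed $\he^2(|\log\he|+C_\psi)$ term, rescale to $\he=1$ via \eqref{eq:covhe}, obtain the lower bound from \eqref{eq:lowerER} of Lemma \ref{cor:welldef} (which is exactly where $\he<R_0^{-1}$ enters), and the upper bound from an explicit test configuration. The only cosmetic difference is the test function --- you take $u=w=0$ (i.e.\ $\hat u=\psi/(2r)$, $\hat w=\psi$), while the paper takes $\hat u=0$, $\hat w=\psi$, which makes the tail of the renormalized integrand vanish identically rather than decay like $r^{-4}$; both choices give the $O(1)$ bound needed.
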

\begin{proof}
By eq.~\eqref{eq:covhe}, 
\begin{equation}
  \label{eq:3}
\hat E_\he^1(\hat u,\hat w)=\he^2 \hat E^{\he^{-1}}_1(\hat u_\lambda, \hat w_\lambda)
\quad  \mbox{where}  \quad    \hat u_\lambda  = \he\hat u(\cdot/\he), \quad \hat w_\lambda  = \hat
w(\cdot/\he)\,.  
\end{equation}
Using the definition of $\hat E^R$, eq.~\eqref{eq:13}, we get
\[
\inf \hat E_\he^1\leq \he^2 \hat E^{\he^{-1}}(0,\psi)\leq C\he^2
\]
which proves the upper bound since
\[
I_\he=\hat E^1_\lambda+\he^2\int_0^1\psi(r/\lambda)^2\d r/r=\hat
E^1_\he+\he^2(C+|\log \he|)\,.
\] 
The lower bound follows from eq.~\eqref{eq:3} since by 
 \eqref{eq:lowerER} we have  
 \begin{equation}
 \hat E^{\he^{-1}}_1(\hat u_\lambda, \hat w_\lambda)
 = E^{\he^{-1}}(u_\lambda, w_\lambda) \ge -C
 \end{equation}
 for $\lambda \le 1/ R_0$.
\end{proof}

\renewcommand{\rel}{\rho^{\rm el.}}

Now we are in a position to prove the existence of minimizers for the
renormalized energy.

\begin{theorem}
\label{exist2}
We have $\inf_\W E \in \R$ and the functional $E$ attains its minimum in $\W$.
Moreover there exists a  minimizer $(u,w)$ of $E$ which satisfies
\begin{equation}  \label{eq:convergencew2}
w + \psi \ge 0 \quad \mbox{a.e.} 
\end{equation}
\end{theorem}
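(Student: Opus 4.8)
The plan is the direct method of the calculus of variations, with Lemma~\ref{cor:welldef} supplying both the substitute for coercivity (a uniform bound on $E^+$) and the convenient representation \eqref{eq:identE+} of $E$.

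I would first settle finiteness of the infimum. Letting $R\to\infty$ in \eqref{eq:lowerER} gives $E(u,w)\ge \tfrac12 E^+(u,w)-C\ge -C$ for every $(u,w)\in\W$ (trivially so when $E^+(u,w)=\infty$), hence $\inf_\W E\ge -C$. For the upper bound I evaluate at the test pair $(u,w)=(0,0)$, so that $\hat u=\psi/(2r)$ and $\hat w=\psi$: the contribution on $(1,\infty)$ vanishes, while $E^+(0,0;(0,1))$ is finite because its integrand is smooth and equals the constant $1$ on the interval $\{\psi=0\}$ near the origin; by \eqref{eq:identE+} this yields $E(0,0)<\infty$. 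Thus $\inf_\W E\in\R$.

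Next I would establish compactness of a minimizing sequence $(u_n,w_n)$, normalized so that $E(u_n,w_n)\le C$. The limiting form of \eqref{eq:lowerER} gives $E^+(u_n,w_n)\le C$ uniformly. Lemma~\ref{lem:boundwg} then furnishes a uniform $L^\infty$ bound on $w_n$ over $(1,\infty)$ together with uniform $L^2(\d r/r)$ bounds on $u_n'$ and on $2w_n+w_n^2$, while Lemma~\ref{lem:estsupH1a} yields $\sup_{(0,1)}|\hat w_n|\le C$. On any compact $[a,b]\subset(0,\infty)$ the weights $r,r^{-1}$ are comparable to constants, so these facts, combined with the control of $\hat w_n^2-1+\hat u_n'$ (resp.\ $2w_n+w_n^2+u_n'$), bound $(u_n,w_n)$ in $W^{1,2}([a,b])$. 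A diagonal extraction over the exhaustion $[1/k,k]$ then produces a subsequence (not relabeled) and a limit $(u,w)\in W^{1,2}_{\rm loc}((0,\infty))$ with $(u_n,w_n)\rightharpoonup(u,w)$ weakly in $W^{1,2}_{\rm loc}$ and uniformly on compact subsets.

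For lower semicontinuity I would argue on $(a,1)$ and $(1,b)$ separately. On each, the integrand of $E^+$ is the sum of a term convex in the derivative (namely $(\hat w^2-1+\hat u')^2$ resp.\ $(2w+w^2+u')^2$, plus $\hat w'^2$ resp.\ $w'^2$) and nonnegative zeroth-order terms. Writing $h_n=2w_n+w_n^2\to 2w+w^2=h$ strongly in $L^2_{\rm loc}$ (uniform convergence plus uniform bound) and using $u_n'\rightharpoonup u'$ weakly, the weak sequential lower semicontinuity of the (weighted) $L^2$ norm gives $E^+(u,w;(a,b))\le\liminf_n E^+(u_n,w_n;(a,b))\le\liminf_n E^+(u_n,w_n)\le C$, where the middle inequality uses nonnegativity of the integrand. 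Sending $a\downarrow 0$, $b\uparrow\infty$ by monotone convergence yields $E^+(u,w)\le\liminf_n E^+(u_n,w_n)<\infty$, so $(u,w)\in\W$. Since $u_n(1)\to u(1)$ by uniform convergence near $r=1$, the identity \eqref{eq:identE+} gives $E(u,w)=E^+(u,w)+u(1)+\tfrac14-\int_0^1 \psi^2 r^{-1}\,\d r\le\liminf_n E(u_n,w_n)=\inf_\W E$, so $(u,w)$ is a minimizer.

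Finally, the sign condition is cheap once a minimizer exists: the density $\rel$ depends on $\hat w$ only through $\hat w^2$ and on $\hat w'$ only through $\hat w'^2$ (see \eqref{eq:13}), and the renormalization subtracts a term independent of $(u,w)$. Hence replacing $\hat w$ by $|\hat w|\in W^{1,2}_{\rm loc}$, for which $(|\hat w|')^2=\hat w'^2$ a.e., leaves every $E^R$ and thus $E$ unchanged and preserves membership in $\W$; setting $w:=|\hat w|-\psi$ produces a minimizer with $w+\psi=|\hat w|\ge 0$. The main obstacle is the compactness/lower-semicontinuity step, since the functional is not manifestly coercive: the bound on $E^+$ does not by itself control $w$ or $u'$, and it is precisely Lemma~\ref{lem:boundwg} (through the interpolation Lemma~\ref{lem:interpol}) that supplies the $L^\infty$ bound on $w$ and the $L^2$ control of $u'$ needed both to extract a weak limit and to guarantee, via positivity of $E^+$ and monotone convergence, that no energy escapes to $0$ or to $\infty$.
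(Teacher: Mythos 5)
Your proposal is correct and follows essentially the same route as the paper: lower bound and identity \eqref{eq:identE+} from Lemma~\ref{cor:welldef}, uniform bound on $E^+$ along a minimizing sequence, compactness via Lemma~\ref{lem:boundwg} and a diagonal extraction in $W^{1,2}_{\rm loc}$, weak lower semicontinuity on compact subintervals combined with monotone convergence, and convergence of the boundary term $u_j(1)$. The only (immaterial) difference is that you apply the replacement $\hat w\mapsto|\hat w|$ to the final minimizer, whereas the paper applies it along the minimizing sequence so that the sign condition passes to the limit.
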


\begin{proof} 
By Lemma \ref{cor:welldef}, $E$ is bounded from below.
Moreover $E(0,0) < \infty$. Thus $\inf E \in \R$.
Let $(u_j, w_j)$ be a  minimizing sequence, i.e.,
\begin{equation}
E(u_j, w_j) \to \inf_\W E.
\end{equation}
The energy is $E$ does not change if we replace $\hat w = w + \psi$ by $|\hat w|$. We may thus assume in addition that
\begin{equation}
w_j  + \psi \ge 0.
\end{equation}
Since $\sup_j E(u_j, w_j)$ is bounded we deduce from 
 \eqref{eq:lowerER}   that 
\begin{equation}
E^+(u_j, w_j)  \le C  \quad \forall j \in \N.
\end{equation}
Let $0 < a < b < \infty$. Then it follows directly from the formula for $E^+$
that $u_j$ and $w'_j$ are bounded in $L^2((a,b))$. By Lemma
\ref{lem:boundwg} the sequence $w_j$ is bounded in $L^\infty$. Thus $w_j$ and $w_j^2$ are  bounded in $L^2((a,b))$. 
Since $u'_j + 2w_j + w_j^2$
is bounded in $L^2((a,b))$ it follows that $u'_j$ is bounded in $L^2((a,b))$. 
Thus there exist a subsequence of $(u_j, w_j)$ which converges weakly in 
$W^{1,2}((a,b))$. We can apply this argument with $a = 1/k$, $b = k$ for $k \in \N$, $k \ge 2$
and successively select subsequences. By a diagonalization argument there exists a single subsequence
(still denoted by $(u_j, w_j)$) that converges weakly in $W^{1,2}_{loc}((0, \infty))$:
\begin{equation}
(u_j, w_j)  \rightharpoonup (u,w)   \quad \mbox{in  $W^{1,2}_{loc}((0, \infty))$. }
\end{equation}
By the compact Sobolev embedding this implies
\begin{equation}  \label{eq:locunif}
(u_j, w_j) \to (u,w)  \quad \mbox{locally uniformly in $(0, \infty)$}
\end{equation}
In particular we have the weak convergences
\begin{equation}
2 w_j + w_j^2 + u'_j  \rightharpoonup   2 w + w^2 + u    \quad \mbox{in $L^2_{loc}((0, \infty))$}
\end{equation}
and
\begin{equation}
\hat  w_j^2 - 1 + \hat u'_j  \rightharpoonup   \hat w^2 - 1 + \hat u    \quad \mbox{in $L^2_{loc}((0, \infty ))$,}
\end{equation}
where $\hat w_j = w_j + \psi$ , $\hat u_j = u_j + \psi/ 2r$, $\hat w = w + \psi$ , $\hat u = u + \psi/ 2r$.

Weak lower semicontinuity of the $L^2$ norm implies that for $0< a < 1 < b < \infty$.
\begin{align} 
& \int_a^1  r \d r    \left[ (\hat w^2 - 1 + \hat u'^2)^2 + \frac{\hat u^2}{r^2} + \frac{\hat w^2}{r^2} + \hat w'^2   \right]  \non \\
\le &
\liminf_{j \to \infty}
\int_a^1  r \d r    \left[ (\hat w_j^2 - 1 + \hat u_j'^2)^2 + \frac{\hat u_j^2}{r^2} + \frac{\hat w_j^2}{r^2} + \hat w_j'^2   \right] 
 \label{eq:lsc1}
\end{align}
and
\begin{align}  
& \int_1^b  r \d r    \left[ (2 w +  w^2  + u'^2)^2 + \frac{ u^2}{r^2}  + w'^2   \right]  \non \\
\le &
\liminf_{j \to \infty}
\int_a^1  r \d r    \left[ (2 w_j +  w_j^2 + u_j'^2)^2 + \frac{u_j^2}{r^2} + w_j'^2   \right].
\label{eq:lsc2}
\end{align}
Adding these two inequalities we get
\begin{equation}  \label{eq:lsc3}
E^+(u,w; [a,b]) \le \liminf_{j \to \infty} E^+(u_j, w_j),
\end{equation}
where $E^+(u,w;[a,b])$ is defined as the sum of the terms on the left hand side of
\eqref{eq:lsc1} and \eqref{eq:lsc2}. Finally the monotone convergence theorem 
implies that we can take the limit $a \to 0$ and $b \to \infty$ in \eqref{eq:lsc3}
and deduce
\begin{equation}
E^+(u,w) \le \liminf_{j \to \infty}  E^+(u_j, w_j). 
\end{equation}
This in particular implies that $E^+(u,w) < \infty$ and thus $(u,w) \in \W$. 

We now   use  the relation    \eqref{eq:identE+} between $E^+$ and $E$
and the fact that $u_j(1) \to u(1)$  (see \eqref{eq:locunif})
to deduce that
\begin{equation}
E(u,w) \le \liminf_{j \to \infty}  E(u_j, w_j) = \inf_W E. 
\end{equation}
Thus $(u,w)$ minimizes $E$ in $\W$.

Finally the condition $w_j + \psi \ge 0$ implies that $w + \psi \ge 0$.
\end{proof}

\begin{rem}[Self-penetration of solutions]
The von K\'arm\'an model displays a pathology at the origin for the situation
we want to model. Namely, the solutions we have found show interpenetration of
matter. Consider again  the Euler-Lagrange equation obtained by variation of
$\hat u$,
\[
\left(r\left(\hat w^2-1+\hat u'\right)\right)'=\frac{\hat u}{r}\,.
\]
Since $\hat w\to 0$ for $r\to 0$, the qualitative behaviour of solutions $u$ near the
origin is the same as the one of solutions of the linear equation
\[
\left(r\left(\hat u'-1\right)\right)'=\frac{\hat u}{r}\,.
\]
The solutions of this latter equation are given by
\[
\frac{1}{2}r\log r+C_1r+C_2r^{-1}.
\]
The integration constant $C_2$ has to be set to zero to fulfill the boundary
condition $\hat u(0)=0$. Going back to eq.~\eqref{uhatvardef}, we see that the value of $U$ will be
negative in some punctured neighbourhood of the origin and we have
self-penetration of the solution (somewhere in the region $r\sim
\exp\left(-\e^{-2}\right)$). We expect that this pathology could be cured by
including nonlinear or higher order terms in $u$ in our model. We
refrain from doing so, since the main aspect of this work is the analysis of
the solutions away from the origin.
\end{rem}

\section{Decay properties}
\label{decay1}

We now turn our interest to the decay properties of minimizers. We first  show
that $\lim_{r \to \infty} w(r) = 0$ (if $w + \psi \ge 0$).
This will level the field for an application of stable manifold theory, by which we prove that $u$ and $w$ decay 
like a stretched exponential $\exp(- c \sqrt{r})$.

\begin{lemma}  
\label{cor:ww+2estim} Assume that  $(u,w)\in\W$
 with $E^+(u,w)<\infty$ and $w + \psi \ge 0$.
Then
\begin{equation}  \label{eq:convergencew}
\lim_{R \to \infty} w(R) = 0
\end{equation}
\end{lemma}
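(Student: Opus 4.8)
The plan is to pass to the logarithmic variable $t = \log r$, show that the combination $g = 2w + w^2$ tends to zero, and then invert this relation using the sign hypothesis $w + \psi \ge 0$. Set $\tilde w(t) = w(e^t)$ and $\tilde g(t) = g(e^t)$ with $g(r) = 2w(r) + w(r)^2$. The change of variables in \eqref{eq:trafonew1} turns the finiteness of $E^+(u,w;(1,\infty))$ into $\int_0^\infty \tilde w'^2\,\d t < \infty$ (this is the last term in \eqref{eq:trafonew1}), while Lemma \ref{lem:boundwg}, estimate \eqref{eq:gL2}, gives $\int_0^\infty \tilde g^2\,\d t = \int_1^\infty \frac{\d r}{r}(2w+w^2)^2 < \infty$. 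Thus $\tilde g \in L^2((0,\infty))$ and $\tilde w' \in L^2((0,\infty))$ at the outset.

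The first real step is to upgrade this to $\tilde g \in W^{1,2}((0,\infty))$. Since $\tilde g' = 2(1+\tilde w)\tilde w'$, and since the $L^\infty$ bound \eqref{eq:winfty} of Lemma \ref{lem:boundwg} (applied with $R \to \infty$, using $E^+(u,w;(1,R)) \le E^+(u,w) < \infty$ uniformly in $R$) yields $M := \sup_{[1,\infty)} |w| < \infty$, we get $|\tilde g'| \le 2(1+M)|\tilde w'| \in L^2((0,\infty))$. Hence $\tilde g \in W^{1,2}((0,\infty))$, and Lemma \ref{lem:estsupH1a}(i) gives $\tilde g(t) \to 0$ as $t \to \infty$, i.e. $g(r) = 2w(r) + w(r)^2 \to 0$ as $r \to \infty$.

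The final step converts convergence of $g$ into convergence of $w$, and this is where the hypothesis $w + \psi \ge 0$ is essential. For $r \ge 1$ one has $\psi \equiv 1$, so $w \ge -1$ and therefore $w + 2 \ge 1$. Factoring $g = w(w+2)$ then gives $|w| = |g|/(w+2) \le |g|$, whence $w(r) \to 0$. I would emphasize that the sign condition cannot be dropped: without it, $g \to 0$ is equally compatible with $w \to -2$, so $w$ need not decay; geometrically this is exactly the requirement $\hat w = w + 1 \ge 0$ that selects the correct branch of $\hat w^2 \to 1$.

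The only genuinely delicate ingredient is the passage from the pointwise energy control to $\tilde g \in W^{1,2}((0,\infty))$: it relies on the a priori $L^\infty$ bound for $w$ from Lemma \ref{lem:boundwg} to control $\tilde g'$ by $\tilde w'$, which is precisely what makes the one–dimensional Sobolev embedding of Lemma \ref{lem:estsupH1a}(i) applicable. Everything else is a direct change of variables together with the elementary algebraic inversion $g = w(w+2)$.
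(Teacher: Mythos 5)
Your proof is correct and follows essentially the same route as the paper: the paper also combines Lemma \ref{lem:boundwg} (the $L^\infty$ bound on $w$ and the weighted $L^2$ bound on $2w+w^2$) with Lemma \ref{lem:estsupH1a} to conclude $2w(r)+w^2(r)\to 0$, and then inverts $F(s)=2s+s^2=(s+1)^2-1$ on $[-1,\infty)$ using $w\ge -1$. Your version merely makes explicit what the paper leaves implicit — the logarithmic change of variables (which is exactly how part (ii) of Lemma \ref{lem:estsupH1a} is derived from part (i)) and the quantitative inversion $|w|\le |g|$ in place of the continuity of $F^{-1}$.
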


\begin{proof}
It follows from Lemma \ref{lem:boundwg} and Lemma \ref{lem:estsupH1a}
that
\begin{equation}
\lim_{r \to \infty} 2 w(r) + w^2(r) = 0.
\end{equation}

Now  $ w(r) \ge -1$ and the function $F(s) = 2s + s^2 =(s+1)^2 - 1$ 
has a continuous inverse on $[-1, \infty)$. Thus $\lim_{r \to \infty}  w(r) =0$.
\end{proof}

\bigskip 

Now we show that minimizers actually have decay as $\exp(- c \sqrt r)$ at infinity.
We will use the following  standard tool from stable manifold theory:
\begin{theorem}[\cite{MR0069338}]
Let $s_0\in \R$, $A\in \R^{n\times n}$ a matrix with $k\leq n$ 
eigenvalues with negative real part and $n-k$ eigenvalues with positive real
part , $F:\R^n\times [ s_0,\infty)\to\R^n$
with the property that for every $\e>0$, there exist $S\in [ s_0,\infty)$ and
$\delta_0>0$ such that
\[
\left|F(x,s)-F(\bar x,s)\right|\leq \e |x-\bar x|
\]
whenever $|x-\bar x|\leq \delta_0$ and $s\geq S$.
Then there exist $\delta>0, \bar s>S$ such that for $|p|<\delta$ and
$\bar s>S$, there exists a $k$-dimensional submanifold $\bar M(\bar s)$ of $\R^n$ containing
the origin such that the initial value problem
\begin{equation}
\frac{\d}{\d s}x(s)=Ax(s)+F(x,s)\,,\quad x(\bar s)=p\label{1stsys}
\end{equation}
has a solution $x:[\bar s,\infty)\to\R^n$ for any $p\in \bar M(\bar s)$, and the property 
\[
|x(s)|=o(\exp(-\sigma s)) \text{ as }s\to\infty
\] 
for any $\sigma>0$ such that the absolute values of the real parts of the
eigenvalues of $A$ are all bigger than $\sigma$. Furthermore there exists
$\eta>0$ independent of $\bar s$ such that, if $p\not\in \bar M(\bar s)$, then 
\begin{equation*}
\|x\|_{L^\infty(s_0,\infty)}>\eta\,.
\end{equation*}
\label{stabmanif}
\end{theorem}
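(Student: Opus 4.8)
The plan is to prove Theorem \ref{stabmanif} by the Lyapunov--Perron integral-equation method. First I would split $\R^n = E^s \oplus E^u$ into the $A$-invariant subspaces spanned by the generalized eigenvectors with negative, respectively positive, real part, writing $P_s, P_u$ for the associated spectral projections, so that $\dim E^s = k$. Let $\mu>0$ denote the minimal absolute value of the real parts of the eigenvalues, and fix the rate $\sigma$ from the statement together with an auxiliary rate $\alpha$ with $\sigma < \alpha < \mu$. The standard estimates on the matrix exponential then provide a constant $K=K_\alpha$ with
\[
\|e^{As}P_s\| \le K e^{-\alpha s}\ (s\ge 0), \qquad \|e^{As}P_u\| \le K e^{\alpha s}\ (s\le 0).
\]

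Next I would recast the problem \eqref{1stsys} for decaying solutions as a fixed-point equation. A solution $x$ on $[\bar s,\infty)$ that remains small and decays must satisfy the variation-of-constants formula in which the unstable component is integrated from $s$ to $\infty$ so that it decays:
\[
x(s) = e^{A(s-\bar s)} q + \int_{\bar s}^s e^{A(s-\tau)} P_s F(x(\tau),\tau)\,\d\tau - \int_s^\infty e^{A(s-\tau)} P_u F(x(\tau),\tau)\,\d\tau,
\]
where $q = P_s x(\bar s) \in E^s$ is the free stable datum. I would solve this in the Banach space $X_{\bar s}$ of continuous maps $x:[\bar s,\infty)\to\R^n$ with finite weighted norm $\|x\| = \sup_{s\ge \bar s} e^{\sigma s}|x(s)|$. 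Combining the two semigroup estimates (with rate $\alpha$) against the weight $e^{-\sigma\tau}$, and using the hypothesis that $F(\cdot,s)$ is $\e$-Lipschitz for $|x|\le\delta_0$ and $s\ge S$, both integrals are bounded by a constant times $\frac{K}{\alpha-\sigma} e^{-\sigma s}$ times $\|x-\bar x\|$; hence the right-hand side defines a map $\mathcal T_q$ on a small ball of $X_{\bar s}$ with Lipschitz constant $\le CK\e/(\alpha-\sigma)$. Taking $\bar s$ large (so the Lipschitz hypothesis on $F$ applies along the entire trajectory) and $|q|$ small makes $\mathcal T_q$ a contraction into the ball, and the Banach fixed-point theorem yields a unique solution $x=x(\cdot;q,\bar s)$. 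Membership in $X_{\bar s}$ is precisely the decay $|x(s)| = o(e^{-\sigma s})$, valid for every $\sigma<\mu$.

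I would then set $\bar M(\bar s) = \{\, x(\bar s;q,\bar s) : q\in E^s,\ |q|<\delta \,\}$. By the uniform contraction principle the fixed point depends $C^1$ on the parameter $q$, and since $x(\bar s;q,\bar s) = q - \int_{\bar s}^\infty e^{A(\bar s-\tau)}P_u F\,\d\tau$ has derivative $\Id + O(\e)$ at $q=0$, the map $q\mapsto x(\bar s;q,\bar s)$ is an embedding of a neighborhood of $0$ in the $k$-dimensional space $E^s$, so $\bar M(\bar s)$ is a $k$-dimensional submanifold through the origin. For the instability clause I would argue by contraposition: if the solution with $x(\bar s)=p$ satisfies $\|x\|_{L^\infty}\le\eta$ for $\eta$ small, the integrals above converge and $x$ solves the same fixed-point equation with $q=P_s x(\bar s)$; uniqueness forces $x=x(\cdot;q,\bar s)$, so $p=x(\bar s;q,\bar s)\in \bar M(\bar s)$. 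All constants depend only on $K,\alpha,\sigma,\e$, so $\eta$ is uniform in $\bar s$.

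The main obstacle is the uniform smallness of the contraction constant: one must choose the weight rate $\sigma$ strictly below the semigroup rate $\alpha$ (otherwise the stable integral $\int_{\bar s}^s e^{-\alpha(s-\tau)}e^{-\sigma\tau}\,\d\tau$ grows linearly in $s$ rather than decaying like $e^{-\sigma s}$), and one must verify that the fixed-point iteration never leaves the region $|x|\le\delta_0$, $s\ge S$ where the Lipschitz bound on $F$ is available. Propagating this confinement along the whole half-line, so that the abstract contraction genuinely takes place inside the set where $F$ is controlled, is the delicate bookkeeping on which the argument rests.
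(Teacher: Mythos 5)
You should first be aware that the paper contains no proof of Theorem \ref{stabmanif}: it is quoted as a known result from \cite{MR0069338} (Coddington--Levinson), and the proof given there is precisely the Lyapunov--Perron successive-approximation scheme you outline. So your proposal is not an alternative route but a reconstruction of the canonical argument, and its skeleton is sound: the spectral splitting $\R^n=E^s\oplus E^u$ with dichotomy estimates at an intermediate rate $\alpha\in(\sigma,\mu)$, the integral equation in which the unstable component is integrated from $s$ to $\infty$, the contraction in a weighted space, the manifold realized as a graph over a $\delta$-ball in $E^s$, and contraposition for the final instability clause are exactly the right ingredients, and your rate bookkeeping ($\sigma<\alpha$, the $1/(\alpha\mp\sigma)$ factors) is correct.

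Four repairs are needed to make the argument complete. (i) Your ``into the ball'' (self-map) estimate requires a bound on $|F(x(\tau),\tau)|$ itself, not merely on differences, and hence needs $F(0,s)=0$ (or strong decay of $F(0,s)$). This hypothesis is in fact missing from the theorem as stated in the paper -- as stated the result is false: take $F(x,s)\equiv v\neq 0$ constant, which has Lipschitz constant $0$, yet no solution of $x'=Ax+v$ decays to $0$ -- but it holds in the paper's application, where $f(0,s)=0$; you should state it explicitly and use it. (ii) The claims of $C^1$ dependence on $q$ and ``derivative $\Id+O(\varepsilon)$'' are not available for a merely Lipschitz $F$; the uniform contraction principle gives Lipschitz dependence, so $\bar M(\bar s)$ is the graph of a map from a ball in $E^s$ to $E^u$ with small Lipschitz constant, i.e.\ a Lipschitz (not necessarily $C^1$) $k$-dimensional submanifold through the origin, which suffices for the statement. (iii) In the contraposition you assert that a solution bounded by $\eta$ ``solves the same fixed-point equation''; this is the one genuinely nontrivial missing step. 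One must project the variation-of-constants formula onto $E^u$ and let the upper integration limit tend to $\infty$, using boundedness of $x$ and the backward decay of $e^{A(\bar s-s)}P_u$ to obtain $P_u x(\bar s)=-\int_{\bar s}^\infty e^{A(\bar s-\tau)}P_u F(x(\tau),\tau)\,\d\tau$. Moreover, your uniqueness holds only among decaying functions (fixed points in the weighted space), while here $x$ is merely bounded; you must rerun the contraction estimate in the sup-norm on functions of size at most $\delta_0$ to identify $x$ with the decaying solution. (iv) Membership in your weighted space gives $|x(s)|=O(e^{-\sigma s})$, not $o(e^{-\sigma s})$; the little-$o$ conclusion for each $\sigma<\mu$ follows by running the construction with some $\sigma'\in(\sigma,\mu)$ and invoking the uniqueness just discussed to see that the two solutions coincide.
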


\begin{prop}
For any minimizer $(u,w)$ of $E$ with $w\geq-\psi$,
\begin{align*}
\left|\left(u(r),u'(r),w(r),w'(r)\right)\right|=o\left(\exp(-\sigma \sqrt{r})\right)
\end{align*}
as $r\to\infty$ for any $\sigma<2$.
\label{decayprop}
\end{prop}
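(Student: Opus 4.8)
The plan is to apply the stable manifold theorem (Theorem~\ref{stabmanif}) to the Euler--Lagrange system of $E$, after a change of the independent variable that converts the sub-exponential rate $\exp(-\sigma\sqrt r)$ into a genuine exponential rate. First I would record the Euler--Lagrange equations. A minimizer is a critical point, and by the usual bootstrapping (the integrand is smooth and $w$ is bounded by Lemma~\ref{lem:boundwg}) the weak solution is classical for $r>0$. Varying $E$ in $\hat u$ and $\hat w$ and using $\hat w=w+1$, $\hat u=u+1/(2r)$ for $r\ge1$, one obtains
\begin{align*}
\big(r(2w+w^2+u')\big)' &= \frac{u}{r},\\
(rw')' &= 2r(w+1)(2w+w^2+u').
\end{align*}
Introducing $A:=r(2w+w^2+u')$ and $B:=rw'$ turns this into the first order system $u'=A/r-2w-w^2$, $w'=B/r$, $A'=u/r$, $B'=2(w+1)A$.

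Next I would pass to $s=\sqrt r$ and rescale the unknowns so that all four components are of comparable size. With $\dot{}=\d/\d s=2s\,\d/\d r$ and the new vector $\eta=(u/s,\,w,\,A,\,B/s)$ the system takes the form $\dot\eta=\mathcal A\eta+F(\eta,s)$, where
\[
\mathcal A=\begin{pmatrix} 0 & -4 & 0 & 0\\ 0 & 0 & 0 & 2\\ 2 & 0 & 0 & 0\\ 0 & 0 & 4 & 0\end{pmatrix}
\]
and $F$ collects the terms quadratic in $\eta$ together with the linear terms carrying an explicit factor $1/s$ or $1/s^2$. The four equations form a cycle, whence $\ddddot\eta_3+64\,\eta_3=0$ at leading order; the characteristic polynomial of $\mathcal A$ is $\mu^4+64$, with eigenvalues $\pm2\pm2\i$, two of real part $-2$ and two of real part $+2$. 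The choice $s=\sqrt r$ is forced: the naive linearization in $r$ has all eigenvalues $0$, because the leading order of the fast $w$-balance cancels ($u'\approx-2w$), and this cancellation is exactly what produces the $\sqrt r$ scale and the real parts $\pm2$.

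It then remains to check the hypothesis of Theorem~\ref{stabmanif}: for every $\e>0$ there are $S,\delta_0$ with $|F(\eta,s)-F(\bar\eta,s)|\le\e|\eta-\bar\eta|$ whenever $|\eta|,|\bar\eta|\le\delta_0$ and $s\ge S$. This holds since the quadratic part of $F$ has Lipschitz constant $O(\delta_0)$ on the $\delta_0$-ball and the remaining linear part has coefficients $O(1/S)$. Finally I would show that along a minimizer $\eta(s)\to0$. Here $w\to0$ is Lemma~\ref{cor:ww+2estim}; moreover the finiteness of $E^+(u,w)$, rewritten with $\d r=2s\,\d s$, gives $\int^\infty s(\eta_1^2+\eta_4^2)\,\d s<\infty$ and $\int^\infty \eta_3^2\,\d s/s<\infty$ (the last bound for $\eta_2$ coming from the control of $(2w+w^2)^2$ in Lemma~\ref{lem:boundwg}). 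Using the exact relations $\dot\eta_3=2\eta_1$, $\dot\eta_2=2\eta_4$ one checks that $\frac{\d}{\d s}\eta_j^2$ is integrable, so each $\eta_j^2$ converges, and the weighted integrability forces the limit to be $0$. Consequently, for large $\bar s$ the tail $\sup_{[\bar s,\infty)}|\eta|$ is below the constant $\eta$ furnished by Theorem~\ref{stabmanif}; by the dichotomy in that theorem the trajectory lies on the stable manifold $\bar M(\bar s)$, so $|\eta(s)|=o(\exp(-\sigma s))$ for every $\sigma<2$. Undoing the rescaling ($u=s\eta_1$, $w=\eta_2$, $u'=A/r-2w-w^2$, $w'=B/r$) and recalling $s=\sqrt r$ yields $|(u,u',w,w')|=o(\exp(-\sigma\sqrt r))$ for every $\sigma<2$.

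I expect the main obstacle to be the change of variables itself, on two counts. First, one must detect the cancellation $u'\approx-2w$ that lowers the effective rate from $\exp(-cr)$ to $\exp(-c\sqrt r)$, and then rescale $(u,A,B)$ by the correct powers of $s$ so that $\mathcal A$ is truly constant and hyperbolic rather than containing unbounded coefficients; a wrong rescaling leaves a growing term such as $sA$ and breaks the stable manifold framework. Second, since Theorem~\ref{stabmanif} only yields decay for trajectories that stay in a fixed small neighborhood, the crux is the a priori argument — combining the energy bound with the equations — that the entire four-dimensional trajectory, and not merely $w$, returns to and remains near the origin, thereby excluding the unstable modes.
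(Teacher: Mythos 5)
Your proposal is correct, and it follows the paper's overall strategy --- pass to $s=\sqrt r$, recast the Euler--Lagrange equations as a perturbation of a constant-coefficient linear system whose eigenvalues are $2(\pm1\pm\i)$, show the trajectory tends to zero, and then invoke the dichotomy in Theorem~\ref{stabmanif} --- but your implementation of the two technical steps is genuinely different and in some respects cleaner. The paper eliminates $\vu$ to obtain a scalar fourth-order equation $\wu^{(4)}=-64\wu+g+h$ for $\wu(s)=w(s^2)$ and works with the state $(\wu^{(3)},\wu'',\wu',\wu)$; it must then prove $\wu',\wu'',\wu^{(3)}\to 0$ by a bootstrap of weighted $L^2$ estimates on derivatives up to $\wu^{(4)}$, and the elimination introduces factors $1/(1+\wu)$ into the nonlinearity, which is therefore only controlled once $\wu$ is known to stay near $0$. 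You instead keep the natural first-order system in $(u/s,\,w,\,r(2w+w^2+u'),\,rw'/s)$, whose nonlinearity is polynomial (no division by $1+w$) and whose linear part indeed has characteristic polynomial $\mu^4+64$, hence eigenvalues $\pm2\pm2\i$ (your matrix checks out); the main gain is that convergence of the \emph{full} state to zero then follows directly from the energy: finiteness of $E^+$ gives $\int s(\eta_1^2+\eta_4^2)\,\d s<\infty$ and $\int \eta_3^2\,\d s/s<\infty$, Lemma~\ref{lem:boundwg} together with $\lim_{r\to\infty}w=0$ (Lemma~\ref{cor:ww+2estim}) gives $\int \eta_2^2\,\d s/s<\infty$, and Cauchy--Schwarz then shows each $\frac{\d}{\d s}\eta_j^2$ is integrable, so each $\eta_j^2$ converges, necessarily to $0$ by the weighted integrability. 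This soft argument replaces the paper's successive higher-derivative estimates. Two points should be written out in a final version: for $j=1,4$ the integrability of $\frac{\d}{\d s}\eta_j^2$ requires the full equations (the quadratic terms $-2\eta_2^2$, $4\eta_2\eta_3$ and the $O(1/s)$ coefficients), not only the exact relations $\dot\eta_3=2\eta_1$, $\dot\eta_2=2\eta_4$ that you cite --- the same pairing of weights does close the estimate; and when undoing the rescaling, $u=s\eta_1$ costs a factor of $s$, which is harmless only because $\sigma<2$ is an open condition.
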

\begin{proof}




Since $E$ and $E^+$ only differ by a boundary term they lead to the same Euler-Lagrange equations. 
Thus for $r > 1$ the Euler Lagrange equations for $(u,w)$ are the same as the Euler-Lagrange equations for the functional
\begin{equation}
\int_1^\infty  r \d r \left [(2 w(r) + w^2(r) + u'(r))^2 + \frac{u^2(r)}{r^2} + w'^2(r) \right].
\end{equation}

It turns out that these EL equations are not of the form required in Theorem \ref{stabmanif} since the linear
part is not autonomous (up to a contribution which decays as $r \to \infty$). We will make a change of variables
to bring the EL equations in a suitable form. To motivate that change of variables it suffices to focus
on the linearization, i.e., we may neglect the terms $w^2$ in the energy functional (as we already know $w \to 0$
at $\infty$). The linearized equations are
\begin{align}
(r (2 w + u'))' & = \frac{u}{r} \\
2 r (2 w + u') & =  (r w')' = r w'' + w'
\end{align}
Differentiation of the second equation and use of the first yields  $r w''' + 2
w'' = 2 u/ r$. Thus $2 u' = r^2  w^{(4)}  + 4 r w^{(3)} + 2 w''$ and inserting this into the second  equation we
get the linearized fourth order equation for $w$
\begin{equation}
\frac{1}{2}(r^2  w^{(4)}  + 4 r w^{(3)} + 2 w'') + 2w = \frac{1}{2} w'' + \frac{1}{2r} w'
\end{equation}
Now we make the change of variables $w(r) = \wu(r^\alpha)$. Then
\begin{equation}
w' = \alpha r^{\alpha - 1} \wu', \quad w^{(k)} =   \alpha^k  r^{k (\alpha - 1)} \wu^{(k)} + \mbox{lower order derivatives}.
\end{equation}
This suggests to choose $\alpha = \frac12$ so that the leading order term in the linear equation becomes  
$\frac{1}{32} \wu^{(4)} + 2 w = 0$. We will now derive the EL equations in the new variables in 
detail. It is most convenient to first transform the functional.

We make the change of variables
\begin{equation}
w(r) = \wu(\sqrt{r}), \quad u(r) = \vu(\sqrt{r}),  \quad s = \sqrt{r}, \quad r = s^2.
\end{equation}
Then 
\begin{equation}
w'(r) = \frac{1}{2\sqrt{r}} \wu'(\sqrt{r}) = \frac{1}{2s} \wu'(s), \quad u'(r) = \frac{1}{2s} \vu'(s).
\end{equation}
Thus for $(u,w) \in \W$
\begin{align} \label{eq:trafo_energy}
E^+(u,w)  \ge &\int_1^\infty  r \d r \left [(2 w(r) + w^2(r) + u'(r))^2 + \frac{u^2(r)}{r^2} + w'^2(r) \right]  \non \\
= & \int_1^\infty 2 s^3 \d s \left[ (2\wu(s) + \wu^2(s) + \frac{1}{2s} \vu'(s) )^2  + \frac{\vu^2(s)}{s^4}  +  (\frac{1}{2s} \wu'(s))^2  \right] \non \\
 = &2   \int_1^\infty  \d s 
\left[ s \left( s (2 \wu + \wu^2) + \frac12 \vu' \right)^2
 + \frac{\vu^2}{s} + \frac14  s \wu'^2 \right].
\end{align}

From \eqref{eq:trafo_energy} we easily obtain the Euler-Lagrange equations (for $s > 1$)
\begin{align}
2 s^2 (1 + \wu) \left(s (2 \wu + \wu^2) + \frac12 \vu'\right) &= \frac14   (s \wu')'   \label{eq:ELs1}\\
\frac12 \left[s \left(s (2 \wu + \wu^2) + \frac12 \vu' \right)\right]' = \frac{ \vu}{s}.  \label{eq:ELs2}
\end{align}
These equations first hold in the weak sense, but by standard elliptic regularity we get
$\wu \in W^{2,2}_{loc}$ and
 $\vu \in W^{2,2}_{loc}$ and the equations hold a.e. By induction one easily
sees that $(\vu, \wu)  \in W^{k,2}_{loc}$ 
for all $k$ and hence $(\vu, \wu) \in C^\infty$.

%
%

We choose $s_0>1$ large enough so that 
\begin{equation}
\frac{1}{2}<1+\wu(s)<\frac{3}{2} \text{ for }s\geq s_0\,,\label{lem1appls}
\end{equation} 
which is
possible by   \eqref{eq:convergencew}.
 In this region we may divide eq.~\eqref{eq:ELs1}  by $2 s(1+w)$
and get
\begin{equation}   \label{eq:ELw2nd}
s \left(s (2 \wu + \wu^2) + \frac12 \vu' \right) = \frac{1}{8s} \frac{1}{(1+\wu)} (s \wu')'.
\end{equation}
Then \eqref{eq:ELs2} becomes
\begin{equation}  \label{eq:ELus}
\vu(s) = \frac{1}{2} s   \left[ \frac{1}{8s (1+\wu)} (s \wu')'\right]'.
\end{equation}
Inserting this into \eqref{eq:ELs1} we get a fourth order equation for $\wu$
\begin{equation}
(1+\wu)  \left[ s(2\wu +  \wu^2) + \frac{1}{4}   \left(s \left(\frac{(s \wu')'}{8s (1+\wu)} \right)'\right)' \right]
= \frac{1}{8s^2} (sw')'.
\end{equation}

This can be rewritten as
\begin{equation}
\wu^{(4)}(s)=-64 \wu(s)+g(x(s),s)+h(x(s),s)\label{4thmod}
\end{equation}
where $x(s)=(\wu^{(3)}(s),\wu''(s),\wu'(s),\wu(s))$, and $g:\R^4\times \R^+\to \R$ contains the nonlinear terms in $x$, $h:\R^4\times \R^+\to \R$ the linear ones with coefficients $O(s^{-1})$. More precisely,
\begin{align}
g(x,s)=&\frac{1}{1+\wu}\left(2\wu'\wu^{(3)}+\frac{4}{s}\wu''\wu'+\wu''^2-\frac{1}{s^2}\wu'^2-\frac{1}{1+\wu}\left(2\wu'^2\wu''+\frac{2}{s}\wu'^3\right)\right)\non\\
&-96\wu^2-32\wu^3\label{gdef}\\
h(x,s)=&-\frac{2}{s}\wu^{(3)}+\frac{5}{s^2}\wu^{(2)}+\frac{3}{s^3}\wu'\label{hdef}
\end{align}
In particular, for $f:=g+h$, and given $\e>0$, there exist $S\geq s_0$, $\delta>0$ such that
\begin{equation*}
\left|f(x,s)-f(\bar x,s)\right|\leq \e |x-\bar x|
\end{equation*}
whenever $|x-\bar x|<\delta$ and $s>S$. Additionally, we have $f(0,s)=0$. Now we may rewrite eq.~\eqref{4thmod} as a system of first order equations, 
\begin{equation*}
\frac{\d}{\d s}x(s)^T=Ax(s)^T+F(x,s)\,
\label{stabmanifappl}
\end{equation*}
where $F:\R^4\times [s_0,\infty) \to \R^4$ is given by $F(x,s)=(f(x,s),0,0,0)^T$ and
\begin{equation*}
A=\left(\begin{array}{cc} 0  & -64\\{\rm Id}_{3\times 3} & 0\end{array}\right)
\end{equation*}
The eigenvalues of $A$ are $2(\pm 1\pm i)$, i.e., $A$ has two
eigenvalues with positive real part and two with negative real part.


We already know that $\lim_{s \to \infty}\wu(s) = \lim_{s \to \infty} w(s^2) = 0$. We will now show that
\begin{equation}  \label{eq:fulldecay}
\lim_{s\to\infty}\wu^{(3)}(s)=\lim_{s\to\infty}\wu''(s)=\lim_{s\to\infty}\wu'(s)=0\,.
\end{equation}

It follows that $\lim_{s \to \infty}x(s) = 0$. From Theorem \ref{stabmanif}, it follows
that there exists $\bar s$ such that  $x(\bar s)\in \bar M(\bar s)$, and hence $|x(s)|=o(\exp(-\sigma s))$ for $\sigma<2$. 
It remains to prove \eqref{eq:fulldecay}.

We first  show $\wu'' \in L^2((s_0, \infty); \d s/s)$. From \eqref{eq:ELs1} we get
\begin{equation}  
| \wu'' + s^{-1} \wu'|   \le C s \left| s (2 \wu + \wu^2) + \frac12 \vu' \right|.
\end{equation}
Therefore
\begin{align*}  \label{eq:L2second}
 \int_{s_0}^\infty   \frac{\d s}{s} \wu''^2\     \leq  
2   \int_{s_0}^\infty   \frac{\d s}{s}  \left[  s^2 \left( s (2 \wu + \wu^2) + \frac12 \vu' \right)^2
+  s^{-2} \wu'^2 \right] 
 < \infty
\end{align*}
by   \eqref{eq:trafo_energy}.
Again by  \eqref{eq:trafo_energy} we have $\wu' \in L^2((s_0, \infty); s \d s)$. Thus 
Lemma \ref{lem:estsupH1a} yields
\begin{equation}  \label{eq:convergencew1}
\lim_{s \to \infty} \wu'(s) = 0.
\end{equation}

Next we derive a weighted $L^2$ estimate for the third derivative $\wu^{(3)}$.
It follows from  \eqref{eq:ELus} that
\begin{equation*}
\frac{\vu}{s}=\frac{1}{16(1+\wu)}\left(\wu^{(3)}+\frac{\wu''}{s}-\frac{\wu'}{s^2}-\frac{\wu'\wu''}{1+\wu}-\frac{\wu'^2}{s(1+\wu)}\right)\,,
\end{equation*}


which implies (using 
the convergence of $w$ and $w'$) 
\begin{equation}
\left|\wu^{(3)}(s)\right|^2\leq C\left(\left|\frac{\vu(s)}{s}\right|^2+\left|\frac{\wu''}{s}\right|^2+\left|\frac{\wu'}{s^2}\right|^2+  \wu''^2 \wu'^2+\left|\frac{\wu'^2}{s}\right|^2\right)\,.\label{w3rough}
\end{equation}
With the possible exception of $\wu''^2 \wu'^2$ all terms on the right hand side are integrable against $s \d s$. Thus we get for $s_1 > s_0$
\begin{equation}
\int_{s_0}^{s_1}  s \d s \left|   \wu^{(3)}\right|^2    \le C (1 + \sup_{[s_0, s_1]} |\wu''|^2),
\label{eq:1}
\end{equation}
where $C$ is controlled by $E^+(u,w)$ and in particular independent of $s_1$. 
Now we get as usual
\begin{align}
&\sup_{[s_0,S1]}  |\wu''|^2  - \inf_{[s_0,s_1]} |\wu''|^2 \le 2  \int_{s_0}^{s_1}  \d s   | \wu''   \wu^{(3)}|  \non \\
\le &  2    \| \wu'' \|_{L^2((s_0, \infty); \d s/ s)} C^{1/2}  ( 1 +  \sup_{[s_0, s_1]} |\wu''|^2)^{1/2}    \non \\
\le & 4 C   \| \wu'' \|_{L^2((s_0, \infty); \d s/ s)}^2 + \frac14  (1 +  \sup_{[s_0, s_1]} |\wu''|^2)
\label{eq:boundsupw2}
\end{align}
Moreover 
\begin{equation}
\inf_{[s_0, s_1]} \wu''^2  \le \frac{1}{\ln s_1/ s_0}    \int_{s_0}^{s_1} \frac{\d s}{s}    \wu''^2  \le \frac{C}{\ln s_1/ s_0}.
\end{equation}
Thus absorbing the term $ \frac14  (1 +  \sup_{[s_0, s_1]} |\wu''|^2)$ into the left hand side of the 
\eqref{eq:boundsupw2} and taking $s_1 \to \infty$ we get
\begin{equation}
\frac34 \sup_{[s_0,\infty]}  \wu''^2  \le  4 C   \| \wu'' \|_{L^2((s_0, \infty); \d s/ s)}^2 < \infty
\end{equation}
and by \eqref{eq:1}
\begin{equation}
\int_{s_0}^{\infty}  s \d s  \left| \wu^{(3)} \right|^2  < \infty.
\end{equation}
Since $\wu'' \in L^2((s_0, \infty); \d s/s)$ it follows that 
\begin{equation}
\lim_{s \to \infty} \wu''(s) = 0.
\end{equation}

Finally we claim that $\wu^{(4)} \in L^2 ((s_0, \infty); \d s/s )$.  Indeed from the previous 
$L^2$ bounds we see immediately that $h \in L^2 ((s_0, \infty); \d s/s) $. Moreover
the convergence of $\wu'$ and $\wu''$ imply that
$g(x(s),s) \le C (|w^{(3)}| + |\wu'| + |\wu''| + |\wu|)$.  By \eqref{eq:gL2}
 we have
\begin{equation}
\int_{s_0}^\infty  \frac{\d s}{s}  (2 \wu + \wu^2)^2  = 
\frac12  \int_{s_0^2}^\infty  \frac{\d r}{r} (2 w + w^2)^2 < \infty.
\end{equation}
Since $\frac52 <  2 +\wu(s) < \frac72$, we get 
$\wu \in L^2((s_0, \infty); \d s/s)$. Together with the weighted $L^2$ estimates
for $\wu^{(i)}$ for $i=1,2,3$ we get $\wu^{(4)} \in L^2 ((s_0, \infty); \d s/s )$.
In combination  with the estimate  $\wu^{(3)} \in L^2((s_0, \infty); s \d s)$  this implies that
\begin{equation}
\lim_{s \to \infty} \wu^{(3)}(s) = 0.
\end{equation}

Thus $\wu^{(i)}=o(\exp(-\sigma
s))$ as $s\to \infty$ for $i=0,1,2,3$ and all $\sigma<2$.
This implies
$u(r),u'(r),w(r),w'(r)=o(\exp(-\sigma\sqrt{r}))$ as $r\to \infty$ for all $\sigma<2$.
\end{proof}

\bigskip

\begin{proof}[Proof of Theorem \ref{mainthm}]
For $\he=1$, this follows from  Theorem \ref{exist2}  and Proposition
\ref{decayprop}. For $\he\neq 1$, we recall that by \eqref{eq:covhe}, we have $\hat E_\he^R(\hat u,\hat
w)=\he^2 \hat E_1^{R/\he^2}(\hat u,\hat w)$ and hence 
\[
\hat E_\he(\hat u,\hat
w)=\he^2\lim_{R\to\infty} \hat E_1^{R/\he}(\hat u_\he,\hat w_\he)=\he^2
E(u_\he,w_\he)
\]
for all $(\hat u,\hat w)\in \W$, where we used the notation $\hat
u_\he=\he^{-1}\hat u(\he\cdot)$, $\hat w_\he=\hat w(\he\cdot)$, $u_\he(r)=\hat
u_\he(r)-\psi( r)/(2r)$, $w_\he=\hat w_\he-\psi$ and Lemma
\ref{cor:welldef}. Now all statements follow from the case $\he=1$ already treated.
\end{proof}

\bibliographystyle{plain}
\bibliography{symcone}
\end{document}